\renewcommand{\qed}{\hfill\small{$\square$}\normalsize}
\theoremstyle{definition}
\newtheorem{lemma}{Lemma}[section]
\newtheorem{definition}[lemma]{Definition}
\newtheorem{proposition}[lemma]{Proposition}
\newtheorem{theorem}[lemma]{Theorem}
\newtheorem{corollary}[lemma]{Corollary}
\numberwithin{equation}{section}
\renewcommand{\qed}{\hfill\small{$\square$}\normalsize}
\DeclareFixedFont{\Acknowledgment}{OT1}{cmr}{bx}{n}{14pt}
\begin{document}

\title{\bf On the deformation of discrete conformal factors on surfaces}
\author{Huabin Ge, Wenshuai Jiang}
\maketitle

\begin{abstract}
In \cite{Luo0}, Feng Luo conjectured that the discrete Yamabe flow will converge to the constant curvature PL-metric after finite number of surgeries on the triangulation. In this paper, we prove that the flow can always be extended (without surgeries) to a solution that converges exponentially fast to the constant curvature PL-metric.
\end{abstract}

%\textbf{Mathematics Subject Classification (2010).} 52C25, 52C26, 53C44.\\
%\tableofcontents

\section{Introduction}\label{Introduction}
\subsection{Background}
It appears that conformal transformations in the discrete settings were first considered by Ro$\check{c}$ek and Williams in \cite{Rocek}. Feng Luo first introduced the notion of discrete conformal factor on triangulated surfaces \cite{Luo0}. In the same paper, Feng Luo proposed to find the PL-metric of constant curvature in the PL-conformal class by discrete Yamabe flow. Luo studied his discrete Yamabe flow by making it as the negative gradient flow of a potential functional $F$. Since the domain where discrete conformal factor makes sense is not convex, the functional $F$ is only locally convex. Hence there are only local rigidity results for curvature map and local convergence results for discrete Yamabe flow. Surprisingly, Bobenko, Pincall and Springborn \cite{Bobenko} expressed $F$ in a concrete, clear and plain form. They further proved that $F$ can be extended to a convex continuously differentiable function defined on the whole Euclidean space. Using Bobenko-Pincall-Springborn's idea, Luo \cite{Luo1} introduce a method to extend $1$-forms and locally convex functions systematically. With the help of extended convex functional $F$, Bobenko-Pincall-Springborn \cite{Bobenko} affirmed a conjecture of Feng Luo \cite{Luo0} that the constant curvature PL-metric is unique in its discrete conformal class. In \cite{Luo0}, Feng Luo also conjectured that the discrete Yamabe flow will converge to the constant curvature PL-metric after finite number of surgeries on the triangulation. In this paper, we approach Luo's question in a different way. We do not need to perform surgeries on the triangulation. If the constant curvature PL-metric exists, then we show that the solution to Luo's discrete Yamabe flow can be extended automatically so as the extended solution converges exponentially fast to a constant curvature PL-metric. On the contrary, if the extended solution converges, then the constant curvature PL-metric exists. The main theorems in the paper are proved using variational principles. In the proof of the main results, we first need to extend the potential functional $F$ which is based on the work of \cite{Bobenko,Luo1}. We also need to show that the extended functional $\widetilde{F}$ is proper, which is almost the same with the authors former work \cite{Ge-Jiang1}.

\subsection{A dictionary of discrete conformal transformation}
Suppose $M$ is a closed surface with a triangulation $\mathcal{T}=\{V,E,F\}$,
where $V,E,F$ represent the sets of vertices, edges and faces respectively.
Let $d: E\rightarrow (0,+\infty)$ be a positive function assigning each edge $\{ij\}$ a length $d_{ij}$. We call $d$ a PL-metric (piecewise linear metric) if
for all triangle $\{ijk\}\in F$, three edge lengthes $d_{ij}$, $d_{jk}$ and $d_{ik}$ satisfy triangle inequalities. The triple $(M, \mathcal{T}, d)$ will be referred to as a PL-surface in the following. We assuming all the vertices are ordered one by one, marked by $1, \cdots, N$, where $N=V^\sharp$
is the number of vertices, and throughout this paper, all functions $f: V\rightarrow \mathds{R}$ will be regarded as column
vectors in $\mathds{R}^N$ and $f_i$ is the value of $f$ at $i$. We use $\mathcal{M}_\mathcal{T}$ as the space of all PL-metrics on the triangulated surface $(M, \mathcal{T})$ throughout this paper. For a fixed PL-surface $(M, \mathcal{T}, d)$, where $d\in\mathcal{M}_\mathcal{T}$ is a PL-metric, each triangle $\{ijk\}\in F$ is in fact an Euclidean triangle with edge lengthes $d_{ij}$, $d_{jk}$ and $d_{ik}$ and $M$ could be taken as gluing many Euclidean triangles coherently. Suppose $\theta_i^{jk}$ is the inner angle of the triangle $\{ijk\}$ at the vertex $i$, the classical well-known discrete Gaussian curvature at each vertex $i$ is defined as
\begin{equation}\label{classical-Gauss-curv}
K_i=2\pi-\sum_{\{ijk\} \in F}\theta_i^{jk},
\end{equation}
and the discrete Gaussian curvature $K_i$ satisfies the following discrete version of Gauss-Bonnet formula \cite{CL1}:
\begin{equation}\label{Gauss-Bonnet}
\sum_{i=1}^NK_i=2\pi \chi(M).
\end{equation}
By the discrete Gauss-Bonnet formula (\ref{Gauss-Bonnet}), the average of total discrete Gaussian curvature
\begin{equation}
K_{av}=\frac{\sum_{i=1}^NK_i}{N}=\frac{2\pi \chi(M)}{N}.
\end{equation}
is determined only by the topological and combinatorial information of $M$.

For any function $u:V\rightarrow\mathds{R}$ defined on all vertices, or say $u\in\mathds{R}^N$, and for each PL-metric $d\in\mathcal{M}_\mathcal{T}$, there corresponds to a new function $u*d:E\rightarrow (0,+\infty)$, with edge length
\begin{equation}
(u*d)_{ij}=e^\frac{u_i}{2}e^\frac{u_j}{2}d_{ij}
\end{equation}
for each edge $\{ij\}\in E$. Note generally $(u*d)_{ij}$, $(u*d)_{jk}$ and $(u*d)_{ik}$ may not satisfy triangle inequality for certain triangle $\{ijk\}\in F$, hence $u*d$ may not be a PL-metric again. For a given PL-metric $d\in\mathcal{M}_\mathcal{T}$, we denote
\begin{equation}
\Omega_{d}=\left\{u\in\mathds{R}^N\big|u*d\in\mathcal{M}_\mathcal{T}\right\}.
\end{equation}
Each $u\in\Omega_{d}$ is called a discrete conformal factor, and $u*d$ is called a discrete conformal transformation of $d$. Moreover, the two PL-metrics $u*d$ and $d$ are called discretely conformal equivalent. This defines an equivalence relation on $\mathcal{M}_\mathcal{T}$. The discrete conformal class of a PL-metric $d$ is denoted as
\begin{equation}
[d]=\left\{u*d\big|u\in\Omega_{d}\right\}.%=\left\{d'\in\mathcal{M}_\mathcal{T}\big|d'=u*d,\;\exists\,u\in\mathds{R}^N\right\}.
\end{equation}

\subsection{Discrete Yamabe flow}
Now we fix a PL-surface $(M, \mathcal{T}, d)$, where $d\in\mathcal{M}_\mathcal{T}$ is a fixed PL-metric and consider the discrete conformal transformation of $d$. We use $K(u)=K(u*d)$ as the discrete Gaussian curvature corresponding to the PL-metric $u*d$ for each $u\in\Omega_{d}$. Then we get a curvature map
$K:\Omega_{d}\rightarrow\mathds{R}^N, \;\;u\mapsto K(u)=K(u*d)$. Note here and in the following $K$ represents the curvature of metric $u*d$, not metric $d$. The discrete Yamabe problem asks if there is a constant curvature PL-metric within the discrete conformal class $[d]$. Feng Luo once conjectured that the constant curvature PL-metric is unique in its discrete conformal class, this is proved by Bobenko, Pincall and Springborn in \cite{Bobenko}. As to the existence of constant curvature PL-metrics, Feng Luo proposed to find the constant curvature PL-metric in the class $[d]$ by a system of ordinary differential equations
\begin{equation}\label{Def-Luo-normalized-Yamabe-flow}
\begin{cases}
{u_i}'(t)=K_{av}-K_i(u)\\
\;\,u(0)=0.
\end{cases}
\end{equation}
which is called (normalized) discrete Yamabe flow. Consider the deformation of discrete conformal factor $u$ along flow (\ref{Def-Luo-normalized-Yamabe-flow}). Note that, $K_i(u)$ as a function of $u$ is smooth and hence locally Lipschitz continuous. By Picard theorem in classical ODE theory, flow (\ref{Def-Luo-normalized-Yamabe-flow}) has a unique solution $u(t)$, $t\in[0, \epsilon)$ for some $\epsilon>0$. However, the flow  (\ref{Def-Luo-normalized-Yamabe-flow}) may develop removable singularities (that is, the triangle inequalities for some triangles may not satisfied any more) in finite time, and the flow stops when ever removable singularity occurs. If removable singularity occurs, say the vertex $k$ is moving toward the interior of the edge $\{ij\}$, and $\{ijk\}$ and $\{ijl\}$ are two triangles with a common edge $\{ij\}$, then one can replace the triangulation $\mathcal{T}$ by a new triangulation $\mathcal{T}'$ which is obtained from $\mathcal{T}$ by deleting the edge $\{ij\}$ and adding a new edge  $\{kl\}$. This ``edge flip" operation is called a surgery on the triangulation, and Feng Luo conjectured that the discrete Yamabe flow will converges to the constant curvature PL-metric after finite number of surgeries on the triangulation.

In this paper, we shall prove that the solution of flow (\ref{Def-Luo-normalized-Yamabe-flow}) can always be extended (without surgeries) to a solution that exists for all time $t\geq 0$. Moreover, the extended solution converges exponentially fast to the constant curvature PL-metric when it exists. The basic idea is to extend the definition of curvature $K$ continuously to a generalized curvature $\widetilde{K}$, which is defined on for all $u\in \mathds{R}^N$. Even the triangle inequalities are not satisfied, $\widetilde{K}$ is still well defined and uniformly bounded by topological and combinatorial data. As to flow (\ref{Def-Luo-normalized-Yamabe-flow}), even if the triangular inequalities may not valid in some finite time, we can still deform the discrete conformal factors along an extended flow until time tends to $+\infty$. We shall prove the main extension and convergence theorem in next section:

\begin{theorem}\label{Thm-main-1}
Given a PL-surface $(M, \mathcal{T}, d)$, where $d\in\mathcal{M}_\mathcal{T}$ is a fixed PL-metric. Suppose $\{u(t)|t\in[0,T)\}$ is the unique maximal solution to flow (\ref{Def-Luo-normalized-Yamabe-flow}) with $0<T\leq +\infty$. Then we can always extend it to a solution $\{u(t)|t\in[0,+\infty)\}$ when $T<+\infty$. This is equivalent to say, the solution to the extended flow
\begin{equation}\label{Def-extended-flow}
\begin{cases}
{u_i}'(t)=K_{av}-\widetilde{K}_i(u)\\
\;\,u(0)=0
\end{cases}
\end{equation}
exists for all time $t\in[0,+\infty)$. Furthermore, the extended solution converges if any only if there exists a PL-metric of constant curvature. More precisely, on one hand, if the extended solution $u(t)$ converges to some $u(\infty)\in\Omega_d$, then $u(\infty)$ is a PL-metric of constant curvature. On the other hand, assuming there exists a PL-metric $u^*\in\Omega_d$ of constant curvature, then $u(t)$ can always be extended to a solution that converges exponentially fast to the PL-metric of constant curvature as $t\rightarrow+\infty$.
\end{theorem}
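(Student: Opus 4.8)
The plan is to realize the extended flow (\ref{Def-extended-flow}) as the negative gradient flow $u'(t)=-\nabla\widetilde{F}(u(t))$ of a globally defined extended potential $\widetilde{F}:\mathds{R}^N\to\mathds{R}$ with $\partial\widetilde{F}/\partial u_i=\widetilde{K}_i-K_{av}$, and then to extract all three assertions from the convexity and properness of $\widetilde{F}$. First I would construct $\widetilde{K}$ and $\widetilde{F}$ following the extension schemes of Bobenko--Pincall--Springborn \cite{Bobenko} and Luo \cite{Luo1}, extending each inner angle across the wall where a triangle inequality degenerates so that every generalized angle stays in $[0,\pi]$ and the angle sum in each face remains $\pi$. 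This makes $\widetilde{K}$ continuous and, since each vertex meets a fixed number of triangles, uniformly bounded by a constant depending only on $(M,\mathcal{T})$; it also preserves the identity $\sum_i\widetilde{K}_i=2\pi\chi(M)$. Because $\widetilde{K}=\nabla\widetilde{F}+K_{av}$ with $\widetilde{F}\in C^1$, the right-hand side of (\ref{Def-extended-flow}) is locally Lipschitz, so Picard gives a unique local solution; and since $|u_i'(t)|=|K_{av}-\widetilde{K}_i(u)|$ stays bounded by a fixed constant, the solution cannot escape to infinity in finite time and hence extends to all $t\in[0,+\infty)$. On the interval where $u(t)\in\Omega_d$ it agrees with (\ref{Def-Luo-normalized-Yamabe-flow}), so this genuinely extends the maximal solution past $T$. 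I would also record that $\sum_i u_i'=NK_{av}-2\pi\chi(M)=0$, so the whole trajectory stays in the hyperplane $\Sigma=\{u\in\mathds{R}^N:\sum_i u_i=0\}$, on which $\widetilde{F}$ is the relevant object (it is constant along the $(1,\dots,1)$ direction).

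For the forward direction I would use $\widetilde{F}$ as a Lyapunov function: along the flow $\frac{d}{dt}\widetilde{F}(u(t))=-|\nabla\widetilde{F}(u(t))|^2\le 0$. If $u(t)\to u(\infty)\in\Omega_d$, continuity gives $\widetilde{F}(u(t))\to\widetilde{F}(u(\infty))$, hence $\int_0^\infty|\nabla\widetilde{F}(u(t))|^2\,dt<+\infty$, so $\nabla\widetilde{F}(u(t_n))\to 0$ along some $t_n\to\infty$; by continuity $\nabla\widetilde{F}(u(\infty))=0$, i.e. $\widetilde{K}_i(u(\infty))=K_{av}$ for all $i$. Since $u(\infty)\in\Omega_d$, the generalized curvature equals the honest curvature, so $u(\infty)*d$ is a PL-metric of constant curvature $K_{av}$.

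For the converse, suppose a constant curvature metric $u^*\in\Omega_d$ exists; then $\nabla\widetilde{F}(u^*)=0$, and by convexity $u^*$ minimizes $\widetilde{F}$; normalize so that $u^*\in\Sigma$. Here I would invoke the properness of $\widetilde{F}$ on $\Sigma$, established essentially as in \cite{Ge-Jiang1}: the sublevel sets of $\widetilde{F}|_\Sigma$ are compact. Since $\widetilde{F}(u(t))$ is nonincreasing and bounded below by $\widetilde{F}(u^*)$, the trajectory lies in one such compact sublevel set, so its $\omega$-limit set is nonempty and, by the gradient structure, consists of critical points of $\widetilde{F}$, all of which are minimizers. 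Uniqueness of the constant curvature metric \cite{Bobenko} forces the minimizer in $\Sigma$ to be $u^*$, whence the $\omega$-limit set is $\{u^*\}$ and $u(t)\to u^*$.

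For the exponential rate I would use that $u^*\in\Omega_d$, so in a neighborhood $\widetilde{F}=F$ is smooth with Hessian equal to the Jacobian $\partial K/\partial u$, a discrete Laplacian that is positive semidefinite with kernel exactly $(1,\dots,1)$, hence positive definite on $\Sigma$. Thus $F$ is $\lambda$-strongly convex near $u^*$ on $\Sigma$ for some $\lambda>0$; once the (already convergent) trajectory enters this neighborhood, the Polyak--\L{}ojasiewicz inequality $|\nabla\widetilde{F}|^2\ge 2\lambda(\widetilde{F}-\widetilde{F}(u^*))$ yields $\widetilde{F}(u(t))-\widetilde{F}(u^*)\le Ce^{-2\lambda t}$ and then $|u(t)-u^*|\le C'e^{-\lambda t}$. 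The main obstacle I anticipate is the properness of $\widetilde{F}$ on $\Sigma$: bounding $\widetilde{F}$ below by a coercive function requires a careful analysis of how the extended potential grows as $u\to\infty$ transversally to $(1,\dots,1)$, which is the technical heart adapted from \cite{Ge-Jiang1}; by contrast, the exponential rate is routine once local strong convexity at $u^*$ is in hand.
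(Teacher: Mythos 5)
Your overall strategy coincides with the paper's: realize the extended flow as the negative gradient flow of the convex $C^1$ extension $\widetilde F$ (built via Luo's extension lemma or the Bobenko--Pinkall--Springborn explicit formula), get global existence from the uniform bound on $\widetilde K$, get convergence from properness of $\widetilde F$ on the hyperplane $\mathscr U$ (the paper's Theorem \ref{thm-F-tura-proper-tends-infinity}) together with uniqueness of the constant-curvature factor, and get the exponential rate from smoothness and positive definiteness of $Hess_uF$ transverse to $\mathds{1}$ near $u^*$. One step, however, is genuinely wrong as stated: you claim that since $\widetilde F\in C^1$, the right-hand side $K_{av}-\widetilde K_i$ is locally Lipschitz, ``so Picard gives a unique local solution.'' $C^1$ regularity of $\widetilde F$ only makes $\nabla\widetilde F=\widetilde K-K_{av}\mathds{1}$ \emph{continuous}, and in fact $\widetilde K$ is not locally Lipschitz at the walls where a triangle degenerates: for a triangle with sides $a,b,c$ and $\delta=a+b-c\searrow 0$ one computes $\cos\gamma=-1+\bigl((a+b)\delta-\delta^2/2\bigr)/(ab)$, so $\pi-\gamma\sim\sqrt{2(a+b)\delta/(ab)}$; the extended angle is H\"{o}lder continuous of exponent $1/2$ but its one-sided derivative blows up like $\delta^{-1/2}$ as the wall is approached. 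Picard--Lindel\"{o}f therefore does not apply across the walls, which is precisely where the extension is needed.

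The gap is repairable inside your own framework, which is why it is a flaw rather than a collapse: existence of a local solution follows from Peano's theorem, globality then follows from your a priori bound $|u_i'(t)|\le C(M,\mathcal T)$ exactly as you argue, and \emph{forward} uniqueness follows from convexity of $\widetilde F$ instead of Lipschitz continuity --- for two solutions $u,v$, monotonicity of the gradient of a convex function gives $\frac{d}{dt}\|u(t)-v(t)\|^2=-2\langle\nabla\widetilde F(u)-\nabla\widetilde F(v),\,u-v\rangle\le 0$. You do need uniqueness somewhere: both to assert that the extended solution agrees with the maximal solution of (\ref{Def-Luo-normalized-Yamabe-flow}) on $[0,T)$, and to justify the invariance of the $\omega$-limit set in your LaSalle argument; so this substitution should be made explicit. (The paper itself states Theorem \ref{thm-section3-mainthm} without detailed proof, deferring to \cite{Ge-Jiang1}, so it glosses over this well-posedness point as well.) Everything else in your outline --- the extended Gauss--Bonnet identity forcing the trajectory into the hyperplane, the Lyapunov argument for the forward implication, properness plus Theorem \ref{corollary-unique-K-average}-style uniqueness for convergence, and local strong convexity giving the Polyak--\L{}ojasiewicz exponential rate --- is correct and matches the paper's route.
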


\section{Proofs of Theorem \ref{Thm-main-1}}\label{section-prove}
\subsection{Extend the definition of curvature}
Set $\Delta=\left\{(x_1, x_2, x_3)\big|\;x_1+x_2>x_3>0,\;x_1+x_3>x_2>0,\;x_2+x_3>x_1>0\right\}$. It is obviously that $\Delta$ is a open convex cone of $\mathds{R}^3_{>0}$. For each $(x_1, x_2, x_3)\in\Delta$, there corresponds to an Euclidean triangle $\triangle 123$, with three edge lengths $x_1$, $x_2$ and $x_3$. Let $\theta_i$ be the inner angle in triangle $\triangle 123$ that faces edge $x_i$ for each $i\in\{1, 2, 3\}$. Thus we get a inner angle map $\theta: \Delta\rightarrow \mathds{R}^3_{>0}$, sending three edge lengths $x_1$ ,$x_2$ and $x_3$ to three inner angles $\theta_1$, $\theta_2$ and $\theta_3$.

\begin{definition}(\cite{Bobenko,Luo1,Luo2})
A generalized Euclidean triangle $\triangle$ is a (topological) triangle of vertices $v_1, v_2, v_3$ so that each edge is assigned a positive number, called edge
length. Let $x_i$ be the assigned length of the edge $v_jv_k$ where $\{i, j, k\}$=$\{1, 2, 3\}$. The inner angle $\tilde{\theta}_i$=$\tilde{\theta}_i(x_1, x_2, x_3)$ at the vertex $v_i$ is defined as follows. If $x_1, x_2, x_3$ satisfy the triangle inequalities that $x_j+x_k>x_h$ for $\{h, j, k\}$=$\{1, 2, 3\}$, then $\tilde{\theta}_i$ is the inner angle of the Euclidean triangle of edge lengths $x_1, x_2, x_3$ opposite to the edge of length $x_i$; if $x_i\geq x_j+x_k$, then $\tilde{\theta}_i=\pi$, $\tilde{\theta}_j=\tilde{\theta}_k=0$.
\end{definition}
Thus the angle map $\theta: \Delta\rightarrow \mathds{R}^3_{>0}$ is extended to $\tilde{\theta}: \mathds{R}^3_{>0}\rightarrow \mathds{R}^3_{\geq0}$. It is known that
\begin{lemma}(Luo \cite{Luo1})
The angle function $\tilde{\theta}: \mathds{R}^3_{>0}\rightarrow \mathds{R}^3_{>0}$, $(x_1,x_2,x_3)\mapsto(\tilde{\theta}_1,\tilde{\theta}_2,\tilde{\theta}_3)$ is continuous so that $\tilde{\theta}_1+\tilde{\theta}_2+\tilde{\theta}_3=\pi$.\qed
\end{lemma}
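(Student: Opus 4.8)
The plan is to partition $\mathds{R}^3_{>0}$ according to which triangle inequality (if any) fails, verify continuity on each piece, and then check that the values match across the common boundaries. The first observation is a structural one: in $\mathds{R}^3_{>0}$ at most one of the reversed inequalities $x_i\geq x_j+x_k$ can hold. Indeed, if $x_1\geq x_2+x_3$ then $x_1>x_2$ and $x_1>x_3$ (as $x_2,x_3>0$), which forces the other two genuine triangle inequalities; and two reversed inequalities cannot hold at once, since $x_1\geq x_2+x_3$ together with $x_2\geq x_1+x_3$ would give $0\geq 2x_3$. Hence $\mathds{R}^3_{>0}$ is the disjoint union of the open cone $\Delta$ (all inequalities strict), the three open degenerate regions $\Delta_i=\{x_i>x_j+x_k\}$, and the boundary hypersurfaces $\{x_i=x_j+x_k\}$.

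On $\Delta$ the angles are given by the law of cosines,
\begin{equation}
\cos\tilde{\theta}_i=\frac{x_j^2+x_k^2-x_i^2}{2x_jx_k},\qquad \{i,j,k\}=\{1,2,3\},
\end{equation}
and since $x_j,x_k>0$ with the right-hand side lying strictly in $(-1,1)$ there, $\tilde{\theta}_i$ is continuous (indeed real-analytic) on $\Delta$. On each $\Delta_i$ the definition prescribes the constant values $\tilde{\theta}_i=\pi$ and $\tilde{\theta}_j=\tilde{\theta}_k=0$, which are trivially continuous on the open region.

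The only genuine work is continuity across a boundary face $\{x_i=x_j+x_k\}$, where the degenerate branch already assigns $\tilde{\theta}_i=\pi$, $\tilde{\theta}_j=\tilde{\theta}_k=0$. Approaching such a point from within $\Delta$, I would substitute $x_i\to x_j+x_k$ into the law of cosines above and read off the limits $\cos\tilde{\theta}_i\to -1$ and $\cos\tilde{\theta}_j,\cos\tilde{\theta}_k\to 1$, so that $\tilde{\theta}_i\to\pi$ and $\tilde{\theta}_j,\tilde{\theta}_k\to 0$ by continuity of $\arccos$ on the closed interval $[-1,1]$. These limits coincide exactly with the prescribed degenerate values, and because any small neighborhood of a boundary point meets only $\Delta$, the face itself, and the single adjacent region $\Delta_i$, the glued function $\tilde{\theta}$ is continuous on all of $\mathds{R}^3_{>0}$.

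Finally, the angle-sum identity is immediate on each piece: on $\Delta$ it is the Euclidean triangle angle sum $\tilde{\theta}_1+\tilde{\theta}_2+\tilde{\theta}_3=\pi$, while on the degenerate regions and their boundaries it reads $\pi+0+0=\pi$. Thus $\tilde{\theta}_1+\tilde{\theta}_2+\tilde{\theta}_3\equiv\pi$ throughout, and the asserted continuity follows. The main (and essentially the only) obstacle is the boundary limit computation just described; everything else is structural bookkeeping, and the limit itself collapses to the elementary identity $(x_j+x_k)^2=x_j^2+2x_jx_k+x_k^2$.
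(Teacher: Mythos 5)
Your proof is correct: the decomposition of $\mathds{R}^3_{>0}$ into $\Delta$, the three regions $\{x_i>x_j+x_k\}$, and the faces $\{x_i=x_j+x_k\}$ (with your observation that at most one reversed inequality can hold), together with the law-of-cosines limit $\cos\tilde{\theta}_i\rightarrow -1$, $\cos\tilde{\theta}_j,\cos\tilde{\theta}_k\rightarrow 1$ across a face, is exactly the standard argument. The paper gives no proof of this lemma, quoting it from Luo \cite{Luo1}; the only slip worth noting is in the statement itself rather than your argument: since the degenerate angles vanish, the codomain should be $\mathds{R}^3_{\geq 0}$, as the paper's own sentence preceding the lemma indicates.
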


Now consider the fixed PL-surface $(M,\mathcal{T},d)$ and the discrete conformal transformation $u*d$ of a PL-metric $d$. For each $u\in\mathds{R}^N$, $(u*d)_{ij}$, $(u*d)_{jk}$ and $(u*d)_{ik}$ form the three edge lengthes of a generalized Euclidean triangle for each $\{ijk\}\in F$. Hence the angle map $\tilde{\theta}_i^{jk}=\tilde{\theta}_i^{jk}(u)$, $u\in\mathds{R}^N$ is well defined and is the continuous extension of function $\theta_i^{jk}=\theta_i^{jk}(u)$, $u\in\Omega_d$. Recall the definition of discrete Gaussian curvature $K_i=2\pi-\sum_{\{ijk\} \in F}\theta_i^{jk}$, which is originally defined on $\Omega_d$, now can naturally extends continuously to
\begin{equation}\label{def-K-tuta}
\widetilde{K}_i=2\pi-\sum_{\{ijk\} \in F}\tilde{\theta}_i^{jk}.
\end{equation}
Hence the curvature map $K(u):\Omega_d\rightarrow \mathds{R}^N$ is extended continuously to
$\widetilde{K}(u):\mathds{R}^N\rightarrow \mathds{R}^N$ with discrete Gauss-Bonnet formula remain valid.
\begin{proposition}(\cite{Ge-Jiang1}) For the extended curvature $\widetilde{K}$, the following discrete Gauss-Bonnet formula holds.
\begin{equation}\label{Gauss-Bonnet-extend}
\sum_{i=1}^N\widetilde{K}_i=2\pi \chi(M).
\end{equation}
\end{proposition}

\begin{lemma}(\cite{Luo0})
For any fixed PL-metric $d\in\mathcal{M}_\mathcal{T}$, $\Omega_d\subset\mathds{R}^N$ is open, simply connected and nonempty (in fact, $0\in\Omega_d$). Moreover, if $u\in\Omega_d$, then $u+t\mathds{1}\in\Omega_d$ for any $t\in\mathds{R}$.
\end{lemma}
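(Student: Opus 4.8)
My plan is to dispose of nonemptiness, openness and translation invariance by direct computation, and to reduce simple connectedness to the convexity of a linearized model obtained after a nonlinear change of coordinates. For nonemptiness I would simply check $u=0$: then $(0*d)_{ij}=d_{ij}$, and since $d\in\mathcal{M}_\mathcal{T}$ all triangle inequalities hold, so $0\in\Omega_d$. For openness, I would use that each $(u*d)_{ij}=e^{(u_i+u_j)/2}d_{ij}$ depends continuously on $u$ and that $\Omega_d$ is the intersection, over the finitely many faces $\{ijk\}\in F$, of the strict inequalities $(u*d)_{ij}+(u*d)_{ik}>(u*d)_{jk}$ together with their two cyclic analogues; a finite intersection of such open conditions is open. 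For translation invariance I would observe that $((u+t\mathds{1})*d)_{ij}=e^{t}(u*d)_{ij}$, so passing from $u$ to $u+t\mathds{1}$ multiplies every edge length by the same positive constant $e^{t}$ and hence preserves every triangle inequality.

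The real content is simple connectedness, and the main obstacle is that $\Omega_d$ is genuinely non-convex in the variable $u$, because the triangle inequalities mix exponentials with different exponents. My plan is to linearize these inequalities through the substitution $p_i=e^{-u_i/2}$, which is a diffeomorphism of $\mathds{R}^N$ onto $\mathds{R}^N_{>0}$. Dividing the inequality $(u*d)_{jk}<(u*d)_{ij}+(u*d)_{ik}$ by $e^{(u_i+u_j+u_k)/2}$ should turn it into
\begin{equation}
p_i\,d_{jk}<p_j\,d_{ik}+p_k\,d_{ij},
\end{equation}
with the two cyclic analogues obtained in the same way. Thus in the $p$-coordinates the image $\widetilde{\Omega}_d$ of $\Omega_d$ is cut out by finitely many homogeneous linear strict inequalities together with the positivity constraints $p_i>0$.

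It then follows that $\widetilde{\Omega}_d$ is the intersection of finitely many open half-spaces through the origin with the open orthant $\mathds{R}^N_{>0}$; in particular it is an open convex cone, and it is nonempty since $p=(1,\dots,1)$ (the image of $u=0$) satisfies all the inequalities. A convex set is contractible and therefore simply connected, and since simple connectedness is a topological invariant the diffeomorphism $u\mapsto p$ transports it back to $\Omega_d$; the same argument reproves connectedness, and the cone structure of $\widetilde{\Omega}_d$ gives a second view of translation invariance, because $u\mapsto u+t\mathds{1}$ corresponds to the scaling $p\mapsto e^{-t/2}p$. The only nontrivial idea is the linearizing substitution $p_i=e^{-u_i/2}$; once it is in hand, every clause of the lemma reduces to elementary convexity and continuity.
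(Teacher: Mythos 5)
Your proof is correct; the paper itself states this lemma only as a citation to \cite{Luo0} with no proof of its own, and your argument --- checking $0\in\Omega_d$ directly, getting openness from finitely many strict continuous inequalities, getting translation invariance from $((u+t\mathds{1})*d)_{ij}=e^t(u*d)_{ij}$, and then linearizing the triangle inequalities via $p_i=e^{-u_i/2}$ so that $\Omega_d$ becomes diffeomorphic to an open convex cone, hence contractible --- is exactly the standard route going back to Luo's original observation. In fact your substitution proves slightly more than the lemma asks (contractibility rather than mere simple connectedness), and the positivity constraints $p_i>0$ in your description of $\widetilde{\Omega}_d$ are even redundant: adding the linearized inequalities at two vertices of a triangle $\{ijk\}$ gives $0<2p_kd_{ij}$, so positivity already follows from the half-space conditions.
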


By calculation, one can get $\frac{\partial K_i}{\partial u_j}=\frac{\partial K_j}{\partial u_i}$ on $\Omega_d$, hence $\sum_{i=1}^N(K_i-K_{av})du_i$ is a closed $C^{\infty}$-smooth $1$-form on $\Omega_d$. Note $\Omega_d$ is simply connected, hence the potential functional
\begin{equation}\label{def-functional-F}
F(u)\triangleq\int_{0}^u \sum_{i=1}^N(K_i-K_{av})du_i,\;\;u\in\Omega_d
\end{equation}
is well defined. This type of functional was first constructed by Colin de Verdi$\grave{e}$re \cite{Colindev}. Potential functional (\ref{def-functional-F}) is locally convex on
$\Omega_d$, in fact, we have
\begin{proposition}
On $\Omega_d$, $F(u)$ is $C^{\infty}$-smooth. $Hess_uF$ is positive semi-definite with rank $N-1$ and null space $\{t\mathds{1}|t\in\mathds{R}\}$.
\end{proposition}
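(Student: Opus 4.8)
The plan is to reduce the statement to computing the matrix $\big(\partial K_i/\partial u_j\big)$ and then to recognize this matrix as a cotangent (Dirichlet-energy) Laplacian. Since $\Omega_d$ is open and the edge lengths $(u*d)_{ij}=e^{(u_i+u_j)/2}d_{ij}$ are smooth in $u$ and stay in the open cone $\Delta$ on which the law of cosines expresses each angle as a smooth function of the three lengths, the curvature map $K$ is $C^{\infty}$ on $\Omega_d$. Hence $\sum_i(K_i-K_{av})du_i$ is a smooth closed $1$-form, and $F$, being its line integral on the simply connected set $\Omega_d$, is $C^{\infty}$ with $\partial F/\partial u_i=K_i-K_{av}$. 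As $K_{av}$ is constant, differentiating once more gives $\mathrm{Hess}_uF=\big(\partial K_i/\partial u_j\big)$, which is symmetric by the relation $\partial K_i/\partial u_j=\partial K_j/\partial u_i$ recorded above.

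Next I would localize to a single triangle $\{ijk\}$. Using $\partial l_{ab}/\partial u_c=\tfrac12 l_{ab}(\delta_{ac}+\delta_{bc})$ together with the classical derivatives $\partial\theta_a/\partial l_a=l_a/(2A)$ and $\partial\theta_a/\partial l_b=-l_a\cos\theta_c/(2A)$ (with $A$ the area and $\{a,b,c\}=\{i,j,k\}$), the chain rule and the projection identity $l_a=l_b\cos\theta_c+l_c\cos\theta_b$ yield, for each triangle,
\[
\frac{\partial\theta_i}{\partial u_i}=-\frac{l_{jk}^2}{4A}<0,\qquad \frac{\partial\theta_i}{\partial u_j}=\frac{1}{2}\cot\theta_k .
\]
In particular the local $3\times 3$ block is symmetric, and its rows and columns sum to zero (equivalently $\tilde\theta_i+\tilde\theta_j+\tilde\theta_k=\pi$, or angles are scale invariant so $K$ is unchanged along $\mathds{1}$). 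Summing over all faces shows that $L:=\mathrm{Hess}_uF$ is exactly the cotangent Laplacian: $L_{ij}=-\tfrac12(\cot\theta_k+\cot\theta_l)$ for an edge $\{ij\}$ shared by $\{ijk\}$ and $\{ijl\}$, while $L_{ii}=\sum_{j\sim i}\tfrac12(\cot\theta+\cot\theta')$. The vanishing row sums give $L\mathds{1}=0$, so $\{t\mathds{1}\}$ lies in the null space.

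The main point, and the step I expect to be most delicate, is positive semidefiniteness, because the edge weights $\tfrac12(\cot\theta_k+\cot\theta_l)$ need not be positive when obtuse angles occur, so one cannot simply invoke ``Laplacian with non-positive off-diagonal entries''. Instead I would argue triangle by triangle. For $v\in\mathds{R}^N$ let $\phi$ be the function that is affine on each Euclidean triangle and equals $v$ at the vertices; the classical cotangent formula identifies the contribution of $\{ijk\}$ to $v^{\top}Lv$ with its Dirichlet energy,
\[
\tfrac12\big[\cot\theta_k(v_i-v_j)^2+\cot\theta_i(v_j-v_k)^2+\cot\theta_j(v_k-v_i)^2\big]=\int_{\triangle ijk}|\nabla\phi|^2\,dA\ \ge 0,
\]
which is manifestly nonnegative even for obtuse triangles since it is the integral of a square. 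Summing over $F$ gives $v^{\top}Lv=\int_M|\nabla\phi|^2\,dA\ge 0$, so $L$ is positive semidefinite.

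Finally, $v^{\top}Lv=0$ forces every triangle's Dirichlet energy to vanish, i.e. $\nabla\phi\equiv 0$ on each face, so $\phi$ is locally constant; since $M$ is connected, $\phi$ is globally constant and $v=t\mathds{1}$. Hence the null space is exactly $\{t\mathds{1}\mid t\in\mathds{R}\}$, which is one-dimensional, and therefore $\mathrm{rank}\,\mathrm{Hess}_uF=N-1$. The only genuinely nonroutine ingredient is the per-triangle Dirichlet-energy reformulation in the third paragraph; the angle derivatives in the second paragraph are a direct classical computation.
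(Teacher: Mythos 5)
Your proof is correct, and it is worth noting how it relates to the paper's: the paper's proof of this proposition is essentially a citation --- it asserts that the per-triangle matrix $L_{ijk}=-\frac{\partial(\theta_i^{jk},\theta_j^{ik},\theta_k^{ij})}{\partial(u_i,u_j,u_k)}$ is positive semi-definite of rank $2$ with null space $\{t(1,1,1)^T\}$, pointing to \cite{Glickenstein2,Luo0,Bobenko}, identifies $Hess_uF$ as the cotangent Laplace matrix, and explicitly omits the details. You share the same skeleton (smoothness via the law of cosines on the open cone, reduction to per-face blocks, identification with the cotangent Laplacian), but you actually prove the key lemma rather than cite it: your angle-derivative computation $\frac{\partial\theta_i}{\partial u_i}=-\frac{l_{jk}^2}{4A}$, $\frac{\partial\theta_i}{\partial u_j}=\frac12\cot\theta_k$ is correct (and, incidentally, more accurate than the paper's stated $(i,j)$-entry $\frac{\cot\theta_i^{jk}+\cot\theta_i^{jl}}{2}$, which appears to be a typo --- the cotangents should be taken at the angles opposite the edge $\{ij\}$, with a sign depending on the Laplacian convention, exactly as in your formula). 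Your substitute for the cited linear algebra is the Duffin/Pinkall--Polthier identity expressing the per-face quadratic form as the Dirichlet energy $\int_{\triangle ijk}|\nabla\phi|^2\,dA$ of the piecewise-affine interpolant, which buys two things at once: semidefiniteness is manifest even when obtuse angles make some cotangent weights negative (precisely the trap you correctly flag), and the kernel computation becomes global and conceptual ($v^{\top}Lv=0$ forces $\nabla\phi\equiv0$ face by face, hence $v=t\mathds{1}$ by connectedness of $M$), whereas the cited route would assemble the global kernel from the per-triangle rank-$2$ statement plus connectivity of the triangulation. The one ingredient you invoke without proof, the cotangent Dirichlet-energy formula, is classical and verifiable by direct computation, so the argument is complete.
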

\begin{proof}
The properties of $Hess_uF=\frac{\partial(K_{1},\cdots,K_{N})}{\partial(u_{1},\cdots,u_{N})}$ is essentially proved in \cite{Luo0, Glickenstein2}. For a single triangle $\{ijk\}\in F$, the matrix
$$L_{ijk}\triangleq-\frac{\partial(\theta_i^{jk}, \theta_j^{ik}, \theta_k^{ij})}{\partial(u_i, u_j, u_k)}$$
is positive semi-definite with rank $2$ and null space $\{t(1,1,1)^T|t\in\mathds{R}\}$. For the proof of this fact, see \cite{Glickenstein2,Luo0,Bobenko}. For each edge $\{ij\}\in E$ which is a common edge of triangle $\{ijk\}$ and triangle $\{ijl\}$, the $(i,j)$-entry of $Hess_uF$ is $\frac{\cot\theta_{i}^{jk}+\cot\theta_{i}^{jl}}{2}$. This shows that $Hess_uF$ is in fact the discrete Laplace matrix \cite{Chung} with cotangent edge weight. This type of discrete Laplace matrix is extensively studied especially in computational geometry and in finite element area. We omit the detailed proofs here.\qed\\
\end{proof}

To prove Theorem \ref{Thm-main-1}, we need to extend $F$ to a convex functional $\widetilde{F}$ defined on the whole space $\mathds{R}^N$. There are two ways to extend $F$, one way is Feng Luo's approach \cite{Luo1}, the other is Bobenko-Pincall-Springborn's approach \cite{Bobenko}, we state them separately.

\subsection{Extend potential $F$: Feng Luo's approach}
A differential $1$-form $\omega=\sum_{i=1}^na_i(x)dx_i$ in an open set $U\subset\mathds{R}^n$ is said to be continuous if each $a_i(x)$ is a continuous function on $U$. A continuous $1$-form $\omega$ is called closed if $\int_{\partial\tau}\omega=0$ for any Euclidean triangle $\tau\subset U$. By the standard approximation theory, if $\omega$ is closed and $\gamma$ is a piecewise $C^1$-smooth null homologous loop in $U$, then $\int_{\gamma}\omega=0$. If $U$ is simply connected, then in the integral
$G(x)=\int_{a}^x\omega$ is well defined (where $a\in U$ is arbitrary chose), independent of the choice of piecewise smooth paths in $U$ from $a$ to $x$. Moreover, the function $G(x)$ is $C^1$-smooth so that $\frac{\partial G(x)}{\partial x_i}=a_i(x)$.

\begin{lemma}\label{lemma-luo-essential}(Luo \cite{Luo1})
Suppose $X\subset\mathds{R}^n$ is an open convex set and $A\subset X$ is an open and simply connected subset of $X$ bounded by a real analytic codimension-1 submanifold in $X$. If $\omega=\sum_{i=1}^na_i(x)dx_i$ is a continuous closed $1$-form on $A$ so that $F(x)=\int_{a}^x\omega$ is locally convex on $A$ and each $a_i$ can be extended continuously to $X$ by constant functions to a function $\widetilde{a}_i$ on $X$, then $\widetilde{F}(x)=\int_{a}^x\widetilde{a}_i(x)dx_i$ is a $C^1$-smooth convex function on $X$ extending $F$.\qed
\end{lemma}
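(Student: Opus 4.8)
The plan is to reduce the statement to two assertions about the extended form $\widetilde\omega=\sum_{i=1}^n\widetilde a_i\,dx_i$: that it is a continuous closed $1$-form on $X$, and that its primitive is convex. Granting closedness, the discussion preceding the lemma applies verbatim: as $X$ is convex and hence simply connected, $\widetilde F(x)=\int_a^x\widetilde\omega$ is independent of the chosen piecewise-$C^1$ path, is $C^1$-smooth, and satisfies $\partial\widetilde F/\partial x_i=\widetilde a_i$. Since $\widetilde a_i=a_i$ on $A$ and $A$ is simply connected, integrating along a path inside $A$ gives $\widetilde F=F$ there, so $\widetilde F$ genuinely extends $F$. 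It thus remains to prove (i) that $\widetilde\omega$ is closed on $X$, and (ii) that $\widetilde F$ is convex on $X$.

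For (i) it suffices, by the criterion recalled before the lemma, to show $\int_{\partial\tau}\widetilde\omega=0$ for every Euclidean triangle $\tau\subset X$. I would first dispose of triangles in \emph{general position}, namely those whose three edges meet the real-analytic hypersurface $\partial A$ transversally and in finitely many points. Real analyticity is exactly what makes such triangles dense: restricted to a line, a local defining function of $\partial A$ is real analytic, so it either vanishes identically or has only isolated zeros, and an arbitrarily small perturbation of $\tau$ removes the bad lines. For such $\tau$, the arcs $\partial A\cap\tau$ cut $\tau$ into finitely many subregions, each contained in $A$ or in a connected component of $X\setminus\overline A$, and each with piecewise-$C^1$, null-homologous boundary. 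On a subregion inside $A$ the boundary integral of $\widetilde\omega=\omega$ vanishes by closedness of $\omega$ on $A$ together with continuity of the $\widetilde a_i$ up to $\partial A$; on a subregion inside $X\setminus\overline A$ the constant-extension hypothesis makes each $\widetilde a_i$ constant, so $\widetilde\omega$ is exact there and the boundary integral again vanishes. Adding the subregion integrals, the contributions along the cutting arcs cancel in pairs (the same arc traversed in opposite orientations, with $\widetilde\omega$ single-valued and continuous), leaving $\int_{\partial\tau}\widetilde\omega=0$. Finally $\int_{\partial\tau}\widetilde\omega$ depends continuously on the vertices of $\tau$ and vanishes on the dense set of triangles in general position, hence vanishes for all $\tau$.

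For (ii) I would argue segment by segment. A function on the convex set $X$ is convex iff its restriction to every line segment is convex, and by continuity of $\widetilde F$ it suffices to verify this for a dense set of segments. Take $p,q\in X$ with $[p,q]$ meeting $\partial A$ in finitely many parameter values (dense, again by real analyticity) and set $g(t)=\widetilde F(p+t(q-p))$; these values partition $[0,1]$ into finitely many open subintervals, each mapped into $A$ or into a component of $X\setminus\overline A$. Here $g\in C^1$ with $g'(t)=\widetilde a(p+t(q-p))\cdot(q-p)$. On a subinterval landing in $A$, local convexity of $F$ forces $g'$ to be non-decreasing; on a subinterval landing in $X\setminus\overline A$ the constant extension makes $\widetilde a$ constant along the segment, so $g'$ is constant (equivalently $g$ is affine) there. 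Since $g'$ is continuous on all of $[0,1]$, a function that is continuous on $[0,1]$ and non-decreasing on each piece of a finite partition is non-decreasing throughout; hence $g$ is convex, and therefore so is $\widetilde F$.

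The step I expect to be the main obstacle is (i), the closedness of $\widetilde\omega$ across $\partial A$. Both the density of triangles in general position and the cancellation along the cutting arcs rely essentially on the real analyticity of the boundary, and the passage from generic to arbitrary triangles must be arranged so that mere continuity of the $\widetilde a_i$ suffices. Once closedness is secured, part (ii) is comparatively routine, resting only on the $C^1$-matching of $\widetilde F$ along $\partial A$ and on the fact that the constant extension turns $\widetilde F$ into an affine, hence convex, function on each component of $X\setminus\overline A$.
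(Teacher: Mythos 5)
The paper offers no proof of this lemma at all---it is quoted from Luo \cite{Luo1} with the proof deferred to that reference---and your argument is essentially a reconstruction of Luo's original one: establish closedness of the extended form $\widetilde\omega$ by checking $\int_{\partial\tau}\widetilde\omega=0$ first on triangles in general position relative to the real analytic hypersurface $\partial A$ (using analyticity both for density and for the cut-and-cancel decomposition of $\tau$), then conclude convexity segment by segment by gluing the monotone pieces of $g'$ through its continuity. The reduction is sound, and the two points you flag as delicate---pushing the subregion boundary integrals onto $\partial A$ by continuity of the $\widetilde a_i$, and passing from generic to arbitrary triangles and segments by density---are exactly the steps Luo's proof handles in the same way, so your proposal is correct and takes essentially the same route as the cited source.
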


Lemma \ref{lemma-luo-essential} plays an essential role in \cite{Luo1} and \cite{Luo2}. We use this lemma to extend the definition of $F(u)$. Consider the whole triangulation $(M,\mathcal{T},d)$ with PL-metric $d\in\mathcal{M}_\mathcal{T}$. Recall that $\tilde{\theta}_i=\tilde{\theta}_i(u)$, $u\in\mathds{R}^N$ is the continuous extension of function $\theta_i=\theta_i(u)$, $u\in \Omega_d$. It's easy to see, $\tilde{\theta}_i du_i+\tilde{\theta}_j du_j+\tilde{\theta}_kdu_k$ is a continuous closed $1$-form  on $\mathds{R}^N$, hence the following integration
\begin{equation}
\widetilde{F}_{ijk}(u)\triangleq\int_{0}^{u}\tilde{\theta}_i du_i+\tilde{\theta}_j du_j+\tilde{\theta}_k du_k, \,\,\,u\in\mathds{R}^N
\end{equation}
is well defined and is a $C^1$-smooth concave function on $\mathds{R}^N$. Note that
\begin{equation*}
\begin{aligned}
\sum_{i=1}^N(\widetilde{K}_i-K_{av})du_i=&\sum_{i=1}^N\Big(2\pi-K_{av}-\sum_{\{ijk\}\in F}\tilde{\theta}_i^{jk}\Big)du_i\\
=&\sum_{i=1}^N(2\pi-K_{av})du_i-\sum_{i=1}^N\sum_{\{ijk\}\in F}\tilde{\theta}_i^{jk}du_i\\
=&\sum_{i=1}^N(2\pi-K_{av})du_i-\sum_{\{ijk\}\in F}\Big(\tilde{\theta}_i^{jk}du_i+\tilde{\theta}_j^{ik}du_j+\tilde{\theta}_k^{ij}du_k\Big),
\end{aligned}
\end{equation*}
which shows that $\sum_{i=1}^N(\widetilde{K}_i-K_{av})du_i$ is a continuous closed $1$-form on $\mathds{R}^N$. Therefore, the extended potential
\begin{equation}
\widetilde{F}(u)\triangleq\int_{u_0}^u \sum_{i=1}^N(\widetilde{K}_i-K_{av})du_i, \,\,\, u\in \mathds{R}^N
\end{equation}
is well defined and $C^1$-smooth that extends potential $F$.

\subsection{Extend potential $F$: Bobenko-Pincall-Springborn's approach}
Bobenko-Pincall-Springborn's approach is very surprising, they actually calculated the concrete expression of potential $F$. Set
\begin{equation}
L(x)=-\int_0^x\log|2\sin(t)|dt,
\end{equation}
it is called Milnor's Lobachevsky function \cite{Bobenko}, and consider the function
\begin{equation}\label{def-f-bobenko-springborn-pinkall}
f(x,y,z)=\alpha x+\beta y+\gamma z+L(\alpha)+L(\beta)+L(\gamma),
\end{equation}
where $\alpha$, $\beta$, and $\gamma$ are the angles in a Euclidean triangle that face sides $e^x$, $e^y$, and $e^z$ respectively. $f$ is (for now) only defined on the set $\{(x,y,z)|(e^x,e^y,e^z)\in\Delta\}$ and is locally convex. It's Bobenko-Pincall-Springborn's pioneered observation that the definition of $f$ can be extend to $\mathds{R}^3$ as follows. Define $f(x,y,z)$ by equation (\ref{def-f-bobenko-springborn-pinkall}) for all $(x,y,z)\in\mathds{R}^3$, where for $(x,y,z)$ do not contained in the set $\{(x,y,z)|(e^x,e^y,e^z)\in\Delta\}$, the angles $\alpha$, $\beta$, and $\gamma$ are defined to be $\pi$ for the angle opposite the side that is too long and $0$ for the other two. The so extended function $f:\mathds{R}^3\rightarrow\mathds{R}$ is $C^1$-continuously differentiable and convex.

For each edge $\{ij\}\in E$, set $\lambda_{ij}=2\ln (u*d)_{ij}$, then $\lambda_{ij}=u_i+u_j+2\ln d_{ij}$. Following Bobenko-Pincall-Springborn's approach, we consider the function
\begin{equation}
\mathcal{E}(u)=\sum_{\{ijk\}\in F}\Big(2f\Big(\frac{\lambda_{ij}}{2},\frac{\lambda_{jk}}{2},\frac{\lambda_{ik}}{2}\Big)-
\frac{\pi}{2}\big(\lambda_{ij}+\lambda_{jk}+\lambda_{ik}\big)\Big)+\sum_{i=1}^N\big(2\pi-K_{av}\big)u_i,\;u\in\mathds{R}^N.
\end{equation}
Obviously, $\mathcal{E}$ is convex and $C^1$-smooth on $\mathds{R}^N$. Moreover, it's easy to get
\begin{equation}
\nabla_u\mathcal{E}=\nabla_u\widetilde{F}=(\widetilde{K}-K_{av}\mathds{1})^T
\end{equation}
on $\mathds{R}^N$. Specifically, $\nabla_u\mathcal{E}=\nabla_uF=(K-K_{av}\mathds{1})^T$ on $\Omega_d$. Hence $\mathcal{E}(u)$ differs from $\widetilde{F}(u)$ only by a constant on $\mathds{R}^N$, that is,
\begin{equation}
\mathcal{E}(u)=\widetilde{F}(u)+\mathcal{E}(0).
\end{equation}
Thus $\mathcal{E}(u)-\mathcal{E}(0)$ is exactly the extension of $F(u)$ from $u\in\Omega_d$ to $u\in\mathds{R}^N$.

\subsection{Properness of potential $F$}
\begin{theorem}\label{thm-F-tura-proper-tends-infinity}
Given a PL-surface $(M,\mathcal{T},d)$. Assuming there exists a discrete conformal factor $u_{av}\in\Omega_d$ so that $u_{av}*d$ is a PL-metric of constant curvature. Set $\mathscr{U}\triangleq \{u\in \mathds{R}^N|u^T\mathds{1}=0\}$. Then $\widetilde{F}(u)$ is proper on $\mathscr{U}$ and
$\lim\limits_{u\in \mathscr{U}, u\rightarrow\infty}\widetilde{F}(u)=+\infty.$
\end{theorem}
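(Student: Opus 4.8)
The plan is to show that $\widetilde F$, which is convex and $C^1$ on all of $\mathds R^N$, becomes coercive once restricted to $\mathscr U$, and then to read off properness from coercivity. Two reductions come first. Differentiating along the constant vector, $\mathds1^T\nabla_u\widetilde F=\sum_{i=1}^N(\widetilde K_i-K_{av})=2\pi\chi(M)-NK_{av}=0$ by the extended Gauss--Bonnet formula (\ref{Gauss-Bonnet-extend}), so $\widetilde F(u+t\mathds1)=\widetilde F(u)$ for all $t$ and it is natural to work on $\mathscr U=\{u:u^T\mathds1=0\}$. Next, since $\Omega_d$ and the curvature are invariant under $u\mapsto u+t\mathds1$, I may translate $u_{av}$ into $\mathscr U$ and assume $u^\ast:=u_{av}-\tfrac1N(u_{av}^T\mathds1)\mathds1\in\mathscr U\cap\Omega_d$ is still a constant-curvature factor. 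Then $\widetilde K(u^\ast)=K_{av}\mathds1$, so $\nabla_u\widetilde F(u^\ast)=(\widetilde K(u^\ast)-K_{av}\mathds1)^T=0$; as $\widetilde F$ is convex, $u^\ast$ is a global minimizer.

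The heart of the argument is a directional growth estimate: for every unit $v\in\mathscr U$, the one-variable convex function $g(t)=\widetilde F(u^\ast+tv)$ satisfies $L(v):=\lim_{t\to+\infty}g'(t)>0$. Indeed $g'(0)=(\widetilde K(u^\ast)-K_{av}\mathds1)^Tv=0$, and because $u^\ast$ lies in the open set $\Omega_d$ where $\widetilde F$ coincides with the smooth functional $F$, the function $g$ is $C^2$ near $0$ with $g''(0)=v^T(Hess_{u^\ast}F)\,v$. By the Proposition stating that $Hess_{u^\ast}F$ is positive semi-definite with null space exactly $\{t\mathds1\,|\,t\in\mathds R\}$, and since $v\in\mathscr U\setminus\{0\}$ is not proportional to $\mathds1$, we get $g''(0)>0$. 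Hence $g'(t_1)>0$ for some small $t_1>0$, and since $g$ is convex its derivative $g'$ is nondecreasing, whence $L(v)\ge g'(t_1)>0$.

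Finally I would conclude properness. The quantity $L(v)=\lim_{t\to\infty}(g(t)-g(0))/t$ is precisely the recession function of $\widetilde F|_{\mathscr U}$, which is convex, positively homogeneous and lower semicontinuous; we have just shown it is strictly positive on the compact unit sphere of $\mathscr U$, hence bounded below there by some $\delta>0$. By the standard fact that a closed convex function on a finite-dimensional space is coercive exactly when its recession function is positive in every nonzero direction, $\widetilde F|_{\mathscr U}$ is coercive, i.e.\ $\lim_{u\in\mathscr U,\,u\to\infty}\widetilde F(u)=+\infty$, and in particular $\widetilde F$ is proper on $\mathscr U$.

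I expect the main obstacle to be exactly the strict positivity $L(v)>0$, i.e.\ excluding a ``flat ray'' along which $\widetilde F$ stays bounded. The delicate point is that $\widetilde K$ is globally bounded, so $g'(t)$ cannot blow up and no crude lower bound on the growth rate is available; the resolution is the observation above that the non-degeneracy of $Hess_{u^\ast}F$ transverse to $\mathds1$ already forces $g'$ to be positive for small $t$, and convexity then carries this positivity out to $t=+\infty$. Equivalently, one can compute that along $u^\ast+tv$ each triangle degenerates so that the vertex carrying the smallest value of $v$ absorbs the angle $\pi$, giving $L(v)=-\pi\sum_{\{ijk\}\in F}\min\{v_i,v_j,v_k\}$, a quantity the existence of $u^\ast$ forces to be $\ge0$ and the Hessian argument upgrades to $>0$. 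This reproduces the scheme of \cite{Ge-Jiang1}.
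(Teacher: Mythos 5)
Your proposal is correct and follows essentially the same route as the paper: both hinge on the identical key estimate that along each ray in $\mathscr{U}$ emanating from the constant-curvature factor the directional derivative of $\widetilde F$ vanishes at $t=0$, is strictly positive for small $t>0$ because $Hess\,F$ is positive definite transverse to $\mathds{1}$, and stays positive by convexity. The only difference is packaging of the last step --- you invoke the recession-function coercivity criterion (using lower semicontinuity of $L$ and compactness of the unit sphere of $\mathscr{U}$), where the paper uses its elementary Lemma \ref{lemma-gotoinfinity} --- and you handle explicitly the harmless point, left implicit in the paper, that $u_{av}$ should first be translated into $\mathscr{U}$ using the invariance $\widetilde F(u+t\mathds{1})=\widetilde F(u)$.
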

\begin{proof}
The proof appeared in Proposition 3.9 of \cite{Ge-Jiang1} can be used here word for word, only with $\ln \Omega$ changed here by $\Omega_d$. However, for completeness of this paper and for user's convenience, we still state the detailed proof here. Obviously, $\widetilde{F}(u)\in C^1(\mathds{R}^N)$, and $\nabla_u\widetilde{F}=\widetilde{K}-K_{av}\mathds{1}$. For each direction $\xi\in \mathbb{S}^{N-1}\cap\mathscr{U}$, set $\varphi_{\xi}(t)=\widetilde{F}(u_{av}+t\xi)$, $t\in \mathds{R}$. Obviously $\varphi_{\xi}\in C^1(\mathds{R})$, and $\varphi_{\xi}'(t)=(\widetilde{K}-K_{av}\mathds{1})\cdot\xi$. $\varphi_{\xi}(t)$ is convex on $\mathds{R}$ since $\widetilde{F}$ is convex on $\mathds{R}^N$, hence $\varphi'_{\xi}(t)$ is increasing on $\mathds{R}$. Note $u_{av}\in \Omega_d$, hence there exists $c>0$ so that for each $t\in[-c, c]$, $u_{av}+t\xi$ remains in $\Omega_d$. Note $\widetilde{F}(u)$ is $C^{\infty}$-smooth on $\Omega_d$, hence $\varphi_{\xi}(t)$ is $C^{\infty}$-smooth for $t\in[-c, c]$.
Note that the kernel space of $Hess_u\widetilde{F}$ is $\{t\mathds{1}|t\in\mathds{R}\}$, which is perpendicular to the hyperplane $\mathscr{U}$. Hence $\widetilde{F}(u)$ is strictly convex locally in $\Omega_d\cap\mathscr{U}$. Then $\varphi_{\xi}(t)$ is strictly convex at least on a small interval $[-\delta, \delta]$. This implies that $\varphi'_{\xi}(t)$ is a strictly increase function on $[-\delta, \delta]$. Note that $\varphi'_{\xi}(0)=0$, hence $\varphi'_{\xi}(t)\geq\varphi'_{\xi}(\delta)>0$ for $t>\delta$ while $\varphi'_{\xi}(t)\leq\varphi'_{\xi}(-\delta)<0$ for $t<-\delta$. Hence $\varphi_{\xi}(t)\geq\varphi_{\xi}(\delta)+\varphi'_{\xi}(\delta)(t-\delta)$ for $t\geq \delta$, while
$\varphi_{\xi}(t)\geq\varphi_{\xi}(-\delta)+\varphi'_{\xi}(-\delta)(t+\delta)$ for $t\leq -\delta$. This implies $\lim\limits_{t\rightarrow\pm\infty}\varphi_{\xi}(t)=+\infty$. Next by the following Lemma \ref{lemma-gotoinfinity}, we get the conclusion.\qed
\end{proof}

\begin{lemma}\label{lemma-gotoinfinity}(\cite{Ge-Jiang1})
Assuming $f\in C(\mathds{R}^n)$ and for any direction $\xi\in \mathbb{S}^{n-1}$, $f(t\xi)$ as a function of $t$ is monotone increasing on $[0,+\infty)$ and tends to $+\infty$ as $t\rightarrow +\infty$. Then $\lim\limits_{x\rightarrow\infty}f(x)=+\infty.$
\end{lemma}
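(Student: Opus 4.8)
The plan is to prove the equivalent statement that every sublevel set of $f$ is bounded. Indeed, the assertion $\lim_{x\to\infty}f(x)=+\infty$ is precisely the statement that for each $M\in\mathds{R}$ the closed set $A_M=\{x\in\mathds{R}^n:f(x)\leq M\}$ is bounded. So I would fix $M$ and argue by contradiction, assuming $A_M$ is unbounded. This reformulation lets me exploit compactness of the unit sphere, which is the natural place the hypotheses combine.

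Under this assumption I would pick a sequence $x_k\in A_M$ with $|x_k|\to+\infty$ and normalize to directions $\hat{x}_k=x_k/|x_k|\in\mathbb{S}^{n-1}$. Since $\mathbb{S}^{n-1}$ is compact, after passing to a subsequence I may assume $\hat{x}_k\to\xi$ for some $\xi\in\mathbb{S}^{n-1}$. The hypothesis that $f(t\xi)\to+\infty$ as $t\to+\infty$ then supplies a single radius $T>0$ with $f(T\xi)>M$; this fixed-radius value in the limiting direction is the anchor for the rest of the argument.

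The crucial step is to transport this largeness to the nearby directions $\hat{x}_k$ and then push it outward along each ray. By continuity of $f$ at the point $T\xi$ there is $\delta>0$ such that $f(y)>M$ whenever $|y-T\xi|<\delta$. Since $T\hat{x}_k\to T\xi$, for all large $k$ we have $|T\hat{x}_k-T\xi|<\delta$ and hence $f(T\hat{x}_k)>M$. On the other hand $|x_k|\to+\infty$, so $|x_k|\geq T$ for all large $k$, and by the monotonicity of $t\mapsto f(t\hat{x}_k)$ on $[0,+\infty)$ I conclude
\[
f(x_k)=f(|x_k|\hat{x}_k)\geq f(T\hat{x}_k)>M,
\]
which contradicts $x_k\in A_M$. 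Hence $A_M$ is bounded for every $M$, proving the lemma.

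The part requiring the most care is the coupling of the three ingredients: compactness of $\mathbb{S}^{n-1}$ to extract a limiting direction, continuity to move the large value from the fixed point $T\xi$ to the neighboring points $T\hat{x}_k$, and monotonicity along each ray to extend the inequality out to the far points $x_k$. The observation that makes everything go through is that monotonicity reduces the problem to controlling $f$ at a \emph{single} radius $T$ in each direction rather than uniformly along the whole ray, so that continuity at the one point $T\xi$ already suffices.
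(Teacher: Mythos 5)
Your proof is correct: the reduction to boundedness of sublevel sets, the extraction of a limiting direction $\xi$ by compactness of $\mathbb{S}^{n-1}$, continuity at the single point $T\xi$, and radial monotonicity to push the bound out to $x_k$ fit together without gaps. The paper itself gives no proof here, deferring to Lemma 3.10 of \cite{Ge-Jiang1} with the remark that the argument is elementary; your argument is precisely the standard one intended by that reference, so you have matched the expected approach.
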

\begin{proof}
The proof is element, see Lemma 3.10 in \cite{Ge-Jiang1}.\qed\\
\end{proof}

In \cite{Bobenko}, Bobenko, Pincall and Springborn posed discrete conformal mapping problem and proved it has at most one solution up to scale (see \cite{Bobenko}, Corollary 3.1.5). In our language here, it is equivalent to say, the curvature map $K(u):\Omega_d\rightarrow \mathds{R}^N$ is injective up to scale, or say, $K(u):\Omega_d\cap \mathscr{U}\rightarrow \mathds{R}^N$ is injective. If $K(u)=K(u')$ for two $u, u'\in \Omega_d$, then $u'=u+t\mathds{1}$ for some $t\in\mathds{R}$. This conforms a conjecture of Feng Luo in \cite{Luo0} that the constant curvature PL-metric is unique in its discrete conformal class. However, we can prove more.
\begin{theorem}\label{corollary-unique-K-average}
Given a PL-surface $(M,\mathcal{T},d)$. Assuming there exists a discrete conformal factor $u_{av}\in\Omega_d$ so that $u_{av}*d$ is a PL-metric of constant curvature. Then for the extended curvature $\widetilde{K}$ defined on the extended space $\mathds{R}^N$, $u_{av}$ is the unique value (up to scale) so that the corresponding extended curvature is constant.\qed
\end{theorem}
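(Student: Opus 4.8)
The plan is to recognize that a point $u$ has constant extended curvature precisely when it is a critical point of the convex functional $\widetilde{F}$, and then to combine the properness from Theorem \ref{thm-F-tura-proper-tends-infinity} with the local strict convexity available near $\Omega_d$ to pin down the entire critical set.

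First I would convert the constant-curvature condition into a critical-point condition. If $\widetilde{K}(u)=c\mathds{1}$ for some constant $c$, summing over $i$ and invoking the extended Gauss--Bonnet formula (\ref{Gauss-Bonnet-extend}) forces $Nc=2\pi\chi(M)$, i.e. $c=K_{av}$. Hence $\widetilde{K}(u)=K_{av}\mathds{1}$, which by $\nabla_u\widetilde{F}=\widetilde{K}-K_{av}\mathds{1}$ is exactly the statement $\nabla_u\widetilde{F}(u)=0$; conversely every critical point of $\widetilde{F}$ has constant extended curvature. Since replacing $u$ by $u+t\mathds{1}$ scales all edge lengths $(u*d)_{ij}$ by the common factor $e^{t}$ and therefore leaves every generalized inner angle unchanged, both $\widetilde{K}$ and $\nabla_u\widetilde{F}$ are invariant under $u\mapsto u+t\mathds{1}$. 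I may thus translate any critical point into the hyperplane $\mathscr{U}=\{u^T\mathds{1}=0\}$ and reduce the whole problem to showing that $\widetilde{F}$ has a unique critical point in $\mathscr{U}$. Writing $\bar{u}_{av}$ for the translate of $u_{av}$ into $\mathscr{U}$, which still lies in $\Omega_d$ by Luo's lemma \cite{Luo0}, the constant-curvature hypothesis says $\bar{u}_{av}$ is one such critical point.

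Next I would prove that $\bar{u}_{av}$ is the only critical point in $\mathscr{U}$. Suppose $u^*\in\mathscr{U}$ is another, so $\nabla_u\widetilde{F}(u^*)=0$, and set $\xi=(u^*-\bar{u}_{av})/|u^*-\bar{u}_{av}|\in\mathbb{S}^{N-1}\cap\mathscr{U}$ and $t^*=|u^*-\bar{u}_{av}|>0$. Consider $\varphi_{\xi}(t)=\widetilde{F}(\bar{u}_{av}+t\xi)$, exactly as in the proof of Theorem \ref{thm-F-tura-proper-tends-infinity}. Global convexity of $\widetilde{F}$ makes $\varphi_{\xi}'$ nondecreasing, while $\varphi_{\xi}'(0)=\nabla_u\widetilde{F}(\bar{u}_{av})\cdot\xi=0$ and $\varphi_{\xi}'(t^*)=\nabla_u\widetilde{F}(u^*)\cdot\xi=0$. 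On the other hand, since $\bar{u}_{av}\in\Omega_d$ and the kernel of $Hess_u\widetilde{F}$ there is $\{t\mathds{1}\}$, which is perpendicular to $\mathscr{U}$, the restriction $\varphi_{\xi}$ is \emph{strictly} convex on a small interval $[0,\delta]$; hence $\varphi_{\xi}'(\delta)>\varphi_{\xi}'(0)=0$, and monotonicity gives $\varphi_{\xi}'(t)>0$ for all $t\geq\delta$. Strict increase on $[0,\delta]$ likewise yields $\varphi_{\xi}'(t)>0$ for $0<t\leq\delta$. Either way $\varphi_{\xi}'(t^*)>0$, contradicting $\varphi_{\xi}'(t^*)=0$. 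Therefore $u^*=\bar{u}_{av}$, and every point of constant extended curvature equals $u_{av}+t\mathds{1}$ for some $t\in\mathds{R}$, which is the asserted uniqueness up to scale.

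The main obstacle is that $\widetilde{F}$ is merely convex, not strictly convex, on all of $\mathds{R}^N$, and need not even be strictly convex along $\mathscr{U}$ at points lying outside $\Omega_d$; so one cannot simply invoke uniqueness of minimizers of a strictly convex proper function. The device that circumvents this is to use the strict local convexity that is guaranteed only at the distinguished point $\bar{u}_{av}\in\Omega_d$, where $\widetilde{F}=F$ is smooth with $Hess_uF$ of rank $N-1$, and to propagate it along rays through the monotonicity of the directional derivative supplied by global convexity. This is precisely the mechanism already exploited in Theorem \ref{thm-F-tura-proper-tends-infinity}, so no analytic input beyond that theorem and the extended Gauss--Bonnet formula is needed.
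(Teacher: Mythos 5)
Your proposal is correct and follows essentially the same route as the paper: the paper's proof also takes a putative second constant-curvature point $\hat{u}\in\mathscr{U}$, forms $\xi=t_0^{-1}(\hat{u}-u_{av})$, and derives a contradiction from the fact (established inside the proof of Theorem \ref{thm-F-tura-proper-tends-infinity}) that $t=0$ is the unique zero of $\varphi_{\xi}'$, which is exactly your strict-local-convexity-plus-monotonicity mechanism. Your additions --- the extended Gauss--Bonnet step pinning the constant to $K_{av}$ and the explicit translation invariance justifying the normalization into $\mathscr{U}$ --- merely make explicit details the paper leaves implicit.
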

\begin{proof}
Suppose $\hat{u}\in \mathscr{U}$ is also a discrete conformal factor such that $\hat{u}*d$ is a metric of constant curvature, while $\hat{u}\neq u_{av}$. Write $t_0=\|\hat{u}-u_{av}\|$ and  $\xi=t_0^{-1}(\hat{u}-u_{av})$. Then $\varphi_{\xi}'(t_0)=(\widetilde{K}(\hat{u})-K_{av}\mathds{1})\cdot\xi=0$. On the other hand, we had proved in Proposition \ref{thm-F-tura-proper-tends-infinity} that $t=0$ is the unique zero point of $\varphi_{\xi}'(t)$, which is a contradiction. \qed
\end{proof}

\begin{theorem}\label{thm-section3-mainthm}
The solution to the extended flow
\begin{equation}
\begin{cases}
{u_i}'(t)=K_{av}-\widetilde{K}_i\\
\;\,u(0)\in \mathds{R}^N
\end{cases}
\end{equation}
exists for all time $t\in[0,+\infty)$. If further assuming there exists a discrete conformal factor $u_{av}\in \Omega_d$ so that $u_{av}*d$ is a constant curvature PL-metric, then the solution converges to $u_{av}$ exponentially fast.\qed
\end{theorem}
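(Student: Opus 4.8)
The plan is to exploit the gradient-flow structure established above: since $\nabla_u\widetilde{F}=\widetilde{K}-K_{av}\mathds{1}$, the extended flow is precisely the negative gradient flow $u'(t)=-\nabla_u\widetilde{F}(u(t))$ of the convex $C^1$ functional $\widetilde{F}$ on $\mathds{R}^N$. For global existence I would first note that every extended angle $\tilde{\theta}_i^{jk}$ lies in $[0,\pi]$, so each $\widetilde{K}_i=2\pi-\sum\tilde{\theta}_i^{jk}$ is uniformly bounded by the combinatorics of $\mathcal{T}$; hence the vector field $K_{av}\mathds{1}-\widetilde{K}$ is continuous and bounded, say $\|u'(t)\|\le M$. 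Peano's theorem then yields a local solution, and the a priori bound $\|u(t)\|\le\|u(0)\|+Mt$ keeps the solution in a compact set on every finite interval, so no finite-time blow-up occurs and the maximal solution extends to $[0,+\infty)$. Uniqueness comes for free from monotonicity of the gradient of a convex function: for two solutions $u,v$ one has $\frac{d}{dt}\|u-v\|^2=-2\bigl(\nabla\widetilde{F}(u)-\nabla\widetilde{F}(v)\bigr)\cdot(u-v)\le 0$, forcing $u\equiv v$.

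Next I would identify the invariant subspace. Summing the equations and using the extended Gauss--Bonnet formula (\ref{Gauss-Bonnet-extend}), $\frac{d}{dt}(\mathds{1}^Tu)=NK_{av}-\sum_i\widetilde{K}_i=NK_{av}-2\pi\chi(M)=0$, so the flow preserves $\mathds{1}^Tu$. After translating along $\mathds{1}$ I may assume $u(0)\in\mathscr{U}$, and by Theorem \ref{corollary-unique-K-average} normalize $u_{av}\in\mathscr{U}$, which is then the unique critical point of $\widetilde{F}$ in $\mathscr{U}$ and hence its global minimizer. For convergence I would use $\widetilde{F}$ as a Lyapunov function: $\frac{d}{dt}\widetilde{F}(u(t))=-\|\nabla\widetilde{F}(u(t))\|^2\le 0$, while convexity gives $\frac{d}{dt}\|u(t)-u_{av}\|^2=-2\nabla\widetilde{F}(u)\cdot(u-u_{av})\le -2\bigl(\widetilde{F}(u)-\widetilde{F}(u_{av})\bigr)\le 0$. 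Thus $\|u(t)-u_{av}\|$ is nonincreasing (so the trajectory is bounded, which also follows from the properness in Theorem \ref{thm-F-tura-proper-tends-infinity}) and $\int_0^{\infty}\|\nabla\widetilde{F}(u)\|^2\,dt<\infty$. The $\omega$-limit set is therefore a nonempty compact subset of $\mathscr{U}$ on which $\nabla\widetilde{F}=0$; by the uniqueness of the critical point it equals $\{u_{av}\}$, and combined with the monotonicity of $\|u(t)-u_{av}\|$ this forces $u(t)\to u_{av}$.

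For the exponential rate I would pass to the smooth region. Since $u_{av}\in\Omega_d$, which is open, $\widetilde{F}$ coincides with the $C^{\infty}$ functional $F$ near $u_{av}$, and by the proposition above establishing that $Hess_uF$ is positive semidefinite of rank $N-1$ with null space $\{t\mathds{1}\,|\,t\in\mathds{R}\}$, the Hessian $Hess_{u_{av}}\widetilde{F}$ is positive definite on $\mathscr{U}$, with smallest eigenvalue $\lambda>0$. By continuity of the Hessian there is a convex neighborhood $U\subset\Omega_d$ of $u_{av}$ on which $\widetilde{F}$ is strongly convex in the $\mathscr{U}$-directions, giving $\nabla\widetilde{F}(u)\cdot(u-u_{av})\ge\frac{\lambda}{2}\|u-u_{av}\|^2$ there (using $\nabla\widetilde{F}(u_{av})=0$ and that $u(t)-u_{av}\in\mathscr{U}$). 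Once the trajectory enters $U$, which happens for all large $t$ since $u(t)\to u_{av}$, I obtain $\frac{d}{dt}\|u(t)-u_{av}\|^2\le-\lambda\|u(t)-u_{av}\|^2$, and Gr\"onwall's inequality yields $\|u(t)-u_{av}\|\le Ce^{-\lambda t/2}$.

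The step I expect to be the genuine obstacle is the exponential rate, precisely because $\widetilde{F}$ is only $C^1$ on all of $\mathds{R}^N$: the quadratic-growth estimate is available only in the smooth, strongly convex region $\Omega_d\cap\mathscr{U}$ around $u_{av}$. The argument must therefore be staged, first establishing plain convergence by the soft convexity and compactness reasoning, and only afterwards invoking the local strong convexity. The delicate point tying the global and local pictures together is verifying that the restricted Hessian is uniformly positive definite on a neighborhood within $\mathscr{U}$ and that the trajectory, being trapped in $\mathscr{U}$, actually feels this spectral gap rather than the degenerate $\mathds{1}$-direction.
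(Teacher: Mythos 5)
Your proposal is correct and follows essentially the approach the paper intends (and leaves implicit, since Theorem \ref{thm-section3-mainthm} is stated with no printed proof): the extended flow is the negative gradient flow of the convex $C^1$ functional $\widetilde{F}$, global existence follows from the uniform boundedness of $\widetilde{K}$, convergence from the properness of $\widetilde{F}$ on $\mathscr{U}$ (Theorem \ref{thm-F-tura-proper-tends-infinity}) together with uniqueness of the critical point (Theorem \ref{corollary-unique-K-average}), and the exponential rate from the positive definiteness of $Hess_uF$ restricted to $\mathscr{U}$ near $u_{av}\in\Omega_d$. Your extra details---uniqueness of the ODE trajectory via monotonicity of $\nabla\widetilde{F}$ (which Picard does not give, as $\widetilde{K}$ is merely continuous) and the explicit normalization along $\mathds{1}$ using the conserved quantity $\mathds{1}^Tu$---are correct refinements of that same argument.
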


Theorem \ref{thm-section3-mainthm} implies Theorem \ref{Thm-main-1}. Thus we finally proved Theorem \ref{Thm-main-1}. However, these theorems are still true if we replace the constant curvature to any admissible prescribed curvature. In the following, we use a prescribed discrete Yamabe flow to deform any PL-metric to a metric with admissible prescribed curvature.

\begin{definition}
Given a PL-surface $(M,\mathcal{T},d)$. $\Omega_d$ is the space of all possible discrete conformal factors, while $[d]$ is the discrete conformal class determined by $d$. Consider $K$ as the function of $u$ and denote $K(\Omega_d)\triangleq\big\{K(u)\big|u\in \Omega_d\big\}$. Each prescribed $\bar{K}$ with $\bar{K}\in K(\Omega_d)$ is called admissible. If $\bar{u}\in\Omega_d$ such that $\bar{K}=K(\bar{u})$, we say $\bar{K}$ is realized by $\bar{u}$ (note that the $\bar{u}$ is unique (up to a scalar addition) in $\Omega_d$ that realizes $\bar{K}$ by Bobenko-Pincall-Springborn \cite{Bobenko}).
\end{definition}

\begin{theorem}\label{corollary-unique-K-bar}
Assuming $\bar{K}\in K(\Omega_d)$ is admissible. Then it is realized by an unique discrete conformal factor $\bar{u}$ in the extended space $\mathds{R}^N$ up to a scalar addition.\qed
\end{theorem}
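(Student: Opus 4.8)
The plan is to repeat the variational argument behind Theorem \ref{corollary-unique-K-average}, replacing the constant curvature vector $K_{av}\mathds{1}$ throughout by the prescribed curvature $\bar{K}$. First I would introduce the modified potential
\begin{equation*}
\widetilde{F}_{\bar{K}}(u)\triangleq\int_{u_0}^u \sum_{i=1}^N(\widetilde{K}_i-\bar{K}_i)\,du_i,\qquad u\in\mathds{R}^N.
\end{equation*}
Since $\sum_{i=1}^N\widetilde{K}_i\,du_i$ is a continuous closed $1$-form on $\mathds{R}^N$ (as established in Feng Luo's approach above) and $\sum_{i=1}^N\bar{K}_i\,du_i$ has constant coefficients, their difference is again continuous and closed, so $\widetilde{F}_{\bar{K}}$ is well defined and $C^1$-smooth with $\nabla_u\widetilde{F}_{\bar{K}}=\widetilde{K}-\bar{K}$. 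Because $\widetilde{F}_{\bar{K}}-\widetilde{F}$ is an affine function of $u$ (with constant gradient $K_{av}\mathds{1}-\bar{K}$), the two potentials share the same Hessian. Consequently $\widetilde{F}_{\bar{K}}$ is convex on $\mathds{R}^N$, is $C^\infty$-smooth on $\Omega_d$, and its Hessian there is positive semi-definite with kernel $\{t\mathds{1}\mid t\in\mathds{R}\}$.

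Next I would record that admissibility forces $\sum_{i=1}^N\bar{K}_i=2\pi\chi(M)$ by the extended Gauss--Bonnet formula \eqref{Gauss-Bonnet-extend}, and that $\widetilde{K}$ is invariant under scalar addition; together these give $\tfrac{d}{ds}\widetilde{F}_{\bar{K}}(u+s\mathds{1})=\sum_{i=1}^N\widetilde{K}_i-\sum_{i=1}^N\bar{K}_i=0$, so $\widetilde{F}_{\bar{K}}$ is invariant along $\mathds{1}$ exactly as $\widetilde{F}$ is. I may therefore normalize the realizing factor so that $\bar{u}\in\mathscr{U}$. Then, copying the proof of Theorem \ref{thm-F-tura-proper-tends-infinity} with $u_{av}$ replaced by $\bar{u}$ and $K_{av}\mathds{1}$ by $\bar{K}$, I set $\varphi_{\xi}(t)=\widetilde{F}_{\bar{K}}(\bar{u}+t\xi)$ for $\xi\in\mathbb{S}^{N-1}\cap\mathscr{U}$. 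Here $\varphi_{\xi}'(0)=(\widetilde{K}(\bar{u})-\bar{K})\cdot\xi=0$ because $K(\bar{u})=\bar{K}$ on $\Omega_d$, and the kernel $\{t\mathds{1}\}$ of $Hess_u\widetilde{F}_{\bar{K}}$ is perpendicular to $\mathscr{U}$, so $\varphi_{\xi}$ is strictly convex near $0$, has $t=0$ as its unique critical point, and tends to $+\infty$ as $t\to\pm\infty$. Lemma \ref{lemma-gotoinfinity} then yields that $\widetilde{F}_{\bar{K}}$ is proper on $\mathscr{U}$.

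Finally, uniqueness follows exactly as in Theorem \ref{corollary-unique-K-average}. If $\hat{u}\in\mathscr{U}$ also realizes $\bar{K}$, that is $\widetilde{K}(\hat{u})=\bar{K}$, with $\hat{u}\neq\bar{u}$, put $t_0=\|\hat{u}-\bar{u}\|$ and $\xi=t_0^{-1}(\hat{u}-\bar{u})\in\mathbb{S}^{N-1}\cap\mathscr{U}$; then $\varphi_{\xi}'(t_0)=(\widetilde{K}(\hat{u})-\bar{K})\cdot\xi=0$, contradicting that $t=0$ is the only zero of the strictly increasing $\varphi_{\xi}'$. Hence $\bar{u}$ is the unique realizing factor in $\mathds{R}^N$ up to scalar addition. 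I expect the only point needing care to be the verification that passing from the constant vector $K_{av}\mathds{1}$ to a genuinely non-constant $\bar{K}$ preserves both the closedness of the defining $1$-form and the scalar-addition invariance of the potential; once these are in place, the convexity structure that drove Theorems \ref{thm-F-tura-proper-tends-infinity} and \ref{corollary-unique-K-average} is untouched, since it depends only on $Hess_u\widetilde{F}$, and the argument transfers verbatim.
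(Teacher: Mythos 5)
Your proposal is correct and follows exactly the route the paper intends: the paper proves this theorem by studying the prescribed potential $\widetilde{G}$ of (\ref{def-G-tuta-conformal}), which is precisely your $\widetilde{F}_{\bar{K}}$, and then transferring the properness and uniqueness arguments of Theorems \ref{thm-F-tura-proper-tends-infinity} and \ref{corollary-unique-K-average} with $K_{av}\mathds{1}$ replaced by $\bar{K}$. Your verification of the two points the paper leaves implicit (closedness of the modified $1$-form and scalar-addition invariance via Gauss--Bonnet and admissibility) fills in the omitted details faithfully.
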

Moreover, using the following prescribed Yamabe flow, we can deform any (generalized) discrete conformal factor $u\in\mathds{R}^N$ to any discrete conformal factor with admissible prescribed curvature.
\begin{theorem}\label{thm-section4-mainthm}
Consider the prescribed Yamabe flow
\begin{equation}\label{def-extended-flow-u-prescribed}
\begin{cases}
{u_i}'(t)=\bar{K}_{i}-\widetilde{K}_i\\
\;\,u(0)\in\mathds{R}^N
\end{cases}
\end{equation}
\begin{description}
  \item[(1)] The solution $u(t)$ to flow (\ref{def-extended-flow-u-prescribed}) exists for all time $t\in[0,+\infty)$.
  \item[(2)] If $u(t)$ converges to some $\bar{u}$, then $\bar{K}_{i}$ is admissible and is realized by $\bar{u}$.
  \item[(3)] If $\bar{K}_{i}$ is admissible and is realized by $\bar{u}$, then $u(t)$ converges to $\bar{u}$ exponentially fast.
\end{description}\qed
\end{theorem}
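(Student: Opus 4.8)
The plan is to realize flow (\ref{def-extended-flow-u-prescribed}) as the negative gradient flow of a \emph{prescribed} potential and then transplant, essentially verbatim, the convex-analytic arguments already developed for the constant-curvature case. Since $\bar{K}$ is a fixed vector and $\sum_{i}\widetilde{K}_i\,du_i$ is the continuous closed $1$-form underlying $\widetilde{F}$, the form $\sum_{i=1}^N(\widetilde{K}_i-\bar{K}_i)\,du_i$ is again continuous and closed on $\mathds{R}^N$, so
\[
\widetilde{F}_{\bar{K}}(u)\triangleq\int_{u_0}^{u}\sum_{i=1}^N(\widetilde{K}_i-\bar{K}_i)\,du_i
\]
is well defined and $C^1$-smooth. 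As $\widetilde{F}_{\bar{K}}$ differs from $\widetilde{F}$ only by the linear term $\sum_i(K_{av}-\bar{K}_i)u_i$, it shares the Hessian of $\widetilde{F}$ and is therefore convex on $\mathds{R}^N$, with $\nabla_u\widetilde{F}_{\bar{K}}=\widetilde{K}-\bar{K}$; thus (\ref{def-extended-flow-u-prescribed}) is exactly $u'=-\nabla_u\widetilde{F}_{\bar{K}}$. I will also use the scaling invariance $\widetilde{K}(u+t\mathds{1})=\widetilde{K}(u)$ (all edges of each triangle scale by $e^{t}$, leaving angles unchanged), which lets me reduce the dynamics to the hyperplane $\mathscr{U}$.

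For \textbf{(1)} I would argue by boundedness: every generalized angle lies in $[0,\pi]$, so each $\widetilde{K}_i=2\pi-\sum_{\{ijk\}\in F}\tilde{\theta}_i^{jk}$ is confined to an interval depending only on the combinatorics, whence the right-hand side $\bar{K}-\widetilde{K}$ of (\ref{def-extended-flow-u-prescribed}) is globally bounded. Continuity of $\widetilde{K}$ gives local solutions, the bound forbids finite-time blow-up, and so the solution extends to all $t\in[0,+\infty)$. Uniqueness, which is not automatic since $\widetilde{K}$ is merely continuous, follows from monotonicity of the gradient of a convex function: for two solutions $u,v$ one has $\tfrac{d}{dt}\|u-v\|^2=-2\big(\nabla_u\widetilde{F}_{\bar{K}}(u)-\nabla_u\widetilde{F}_{\bar{K}}(v)\big)\cdot(u-v)\le 0$.

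For \textbf{(3)} I would assume $\bar{K}$ admissible and realized by $\bar{u}\in\Omega_d$. Then $\sum_i\bar{K}_i=\sum_iK_i(\bar{u})=2\pi\chi(M)$, so by (\ref{Gauss-Bonnet-extend}) the quantity $u^T\mathds{1}$ is conserved and I may work in $\mathscr{U}$. The proof of Theorem \ref{thm-F-tura-proper-tends-infinity} now applies word for word with $K_{av}\mathds{1}$ replaced by $\bar{K}$ and $u_{av}$ by $\bar{u}$: the slices $\varphi_\xi(t)=\widetilde{F}_{\bar{K}}(\bar{u}+t\xi)$ are convex with $\varphi_\xi'(0)=(\widetilde{K}(\bar{u})-\bar{K})\cdot\xi=0$ and strictly convex on a small interval (since $\mathrm{Hess}\,\widetilde{F}_{\bar{K}}$ is positive definite on $\mathscr{U}$, whose complement $\mathds{R}\mathds{1}$ is the null space), forcing $\varphi_\xi(t)\to+\infty$; Lemma \ref{lemma-gotoinfinity} then gives properness of $\widetilde{F}_{\bar{K}}$ on $\mathscr{U}$. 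Consequently the flow stays in a compact sublevel set, $\widetilde{F}_{\bar{K}}(u(t))$ decreases to its infimum, and every subsequential limit is a critical point, hence the unique minimizer $\bar{u}$ of Theorem \ref{corollary-unique-K-bar}; this yields full convergence $u(t)\to\bar{u}$. For the rate, near the interior point $\bar{u}\in\Omega_d$ the energy is smooth and $\lambda_0$-strongly convex on $\mathscr{U}$ for some $\lambda_0>0$, so $\tfrac{d}{dt}\|u-\bar{u}\|^2\le-2\lambda_0\|u-\bar{u}\|^2$ once $u(t)$ is close to $\bar{u}$, giving exponential convergence.

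Finally, for \textbf{(2)}, integrating $\tfrac{d}{dt}\widetilde{F}_{\bar{K}}(u(t))=-\|\widetilde{K}(u(t))-\bar{K}\|^2$ shows $\int_0^\infty\|\widetilde{K}(u(t))-\bar{K}\|^2\,dt<+\infty$; since the integrand is bounded and, by continuity of $\widetilde{K}$ together with $u(t)\to\bar{u}$, converges to $\|\widetilde{K}(\bar{u})-\bar{K}\|^2$, that limit must vanish and $\widetilde{K}(\bar{u})=\bar{K}$. I expect the genuine obstacle to be the remaining claim that $\bar{K}$ is \emph{admissible}, i.e.\ that $\bar{u}\in\Omega_d$ rather than a degenerate boundary configuration. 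If $\bar{u}\in\Omega_d$ this is immediate and Theorem \ref{corollary-unique-K-bar} identifies $\bar{u}$ as the realizer. The delicate case is a limit on $\partial\Omega_d$, where some triangle degenerates and $\widetilde{F}_{\bar{K}}$ becomes affine along the degenerate directions, so that $\bar{u}$ is only a non-strict minimizer of the convex energy. To handle it I would analyze these flat directions and show the minimizing set must meet $\Omega_d$, ruling out purely degenerate minimizers and so producing a genuine realizer in $\Omega_d$; this boundary analysis is the crux of the argument.
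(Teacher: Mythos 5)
Your proposal is, in substance, the route the paper itself intends: the paper dispatches this theorem by introducing the prescribed potential $\widetilde{G}$ of (\ref{def-G-tuta-conformal}) --- identical to your $\widetilde{F}_{\bar{K}}$ --- and then omits all details with the remark that the proofs are ``similar'' to the constant-curvature case. Your treatment of (1) and (3) fills in exactly those omitted details: global existence from the combinatorial bound on $\widetilde{K}$, properness of the prescribed potential on $\mathscr{U}$ by running the proof of Theorem \ref{thm-F-tura-proper-tends-infinity} with $K_{av}\mathds{1}$ replaced by $\bar{K}$ and $u_{av}$ by $\bar{u}$, identification of subsequential limits via the uniqueness statement of Theorem \ref{corollary-unique-K-bar}, and the exponential rate from local strong convexity on $\mathscr{U}$ at the interior point $\bar{u}\in\Omega_d$ (using that $u(t)-\bar{u}\in\mathscr{U}$ by the conservation of $u^T\mathds{1}$, after normalizing $\bar{u}$ so that $\bar{u}^T\mathds{1}=u(0)^T\mathds{1}$). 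Your uniqueness argument for the ODE via monotonicity of the gradient of a convex function is a genuinely useful addition: $\widetilde{K}$ is only continuous, and is not obviously locally Lipschitz near $\partial\Omega_d$ (the angle derivatives involve cotangents of angles that degenerate there), a point the paper passes over in silence.

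Concerning (2): your dissipation argument correctly yields $\widetilde{K}(\bar{u})=\bar{K}$, and the ``crux'' you isolate --- upgrading this to admissibility, i.e.\ to $\bar{u}\in\Omega_d$ in the sense of the paper's definition --- is a real issue, but it is not resolved by the paper either, and nothing in the paper's toolkit rules out convergence to a degenerate point where the extended curvature map is flat along degenerate directions. Observe that the analogous assertion in Theorem \ref{Thm-main-1} is carefully phrased with the hypothesis that the limit $u(\infty)$ lies in $\Omega_d$; statement (2) of Theorem \ref{thm-section4-mainthm} should be read the same way (or with ``realized'' understood in the extended sense $\widetilde{K}(\bar{u})=\bar{K}$). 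Under that reading your proof of (2) is already complete --- if $\bar{u}\in\Omega_d$ then $\bar{K}=K(\bar{u})$ is admissible by definition --- and the flat-direction boundary analysis you propose is not needed and is not part of the paper's argument; as literally stated, with an arbitrary limit $\bar{u}\in\mathds{R}^N$, the claim of admissibility is not established by the paper any more than by you.
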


\begin{corollary}
Given a PL-surface $(M,\mathcal{T},d)$. Consider the following prescribed Yamabe flow
\begin{equation}
\begin{cases}
{u_i}'(t)=\bar{K}_{i}-K_i\\
\;\,u(0)=0.
\end{cases}
\end{equation}
Suppose $\{u(t)|t\in[0,T)\}$ is the unique maximal solution with $0<T\leq +\infty$. Then we can always extend it to a solution $\{u(t)|t\in[0,+\infty)\}$ when $T<+\infty$. Furthermore, if $\bar{K}$ is admissible and is realized by $\bar{u}$, then $u(t)$ can always be extended to a solution that converges exponentially fast to $\bar{u}$ as $t\rightarrow+\infty$.\qed
\end{corollary}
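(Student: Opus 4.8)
The plan is to deduce this Corollary from Theorem \ref{thm-section4-mainthm} in precisely the way Theorem \ref{Thm-main-1} was obtained from Theorem \ref{thm-section3-mainthm}. The only difference between the flow in the statement and the extended prescribed flow (\ref{def-extended-flow-u-prescribed}) is that the genuine curvature $K$, which is defined only on $\Omega_d$, has been replaced by its continuous extension $\widetilde{K}$, defined on all of $\mathds{R}^N$. Since $\widetilde{K}=K$ on $\Omega_d$, the two flows are literally the same differential equation wherever the solution lies in $\Omega_d$, and the whole task reduces to matching the classical trajectory against the globally defined extended one.

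First I would record the short-time picture. Because $K$ is smooth on the open set $\Omega_d$ and $u(0)=0\in\Omega_d$, the Picard theorem yields the unique maximal solution $\{u(t)\,|\,t\in[0,T)\}$ asserted in the statement, and this trajectory stays inside $\Omega_d$ for all $t\in[0,T)$ (otherwise $K$ would be undefined); when $T<+\infty$, maximality forces $u(t)$ to approach $\partial\Omega_d$ as $t\to T^-$, which is exactly where a removable singularity appears and the classical flow halts. Next I would invoke Theorem \ref{thm-section4-mainthm}(1) with initial datum $u(0)=0$ to produce a solution $\{\tilde{u}(t)\,|\,t\in[0,+\infty)\}$ of the extended flow (\ref{def-extended-flow-u-prescribed}) existing for all time, requiring no admissibility hypothesis. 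The crucial step is then the matching: since $\widetilde{K}=K$ on $\Omega_d$, the maximal solution $u(t)$, which lies in $\Omega_d$ on $[0,T)$, also solves the extended flow there with the same initial value $0$; by uniqueness of solutions to the extended flow we conclude $\tilde{u}(t)=u(t)$ for $t\in[0,T)$, so that $\tilde{u}$ genuinely extends $u$ to $[0,+\infty)$. This proves the first assertion.

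For the convergence claim I would assume $\bar{K}$ is admissible and realized by $\bar{u}$, and apply Theorem \ref{thm-section4-mainthm}(3): the global solution $\tilde{u}(t)$ converges exponentially fast to $\bar{u}$. Since $\tilde{u}$ is exactly the extension of $u$ built above, the extended solution converges exponentially to $\bar{u}$ as $t\to+\infty$, which is the desired statement.

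The main obstacle I anticipate is the uniqueness needed in the matching step. Because $\widetilde{K}$ is only known to be $C^0$ (it is the gradient of the $C^1$ convex potential $\widetilde{F}$, or of its prescribed-curvature analogue $\int_0^u\sum_i(\widetilde{K}_i-\bar{K}_i)\,du_i$), the classical Picard uniqueness theorem does not apply directly. The clean way around this is to note that (\ref{def-extended-flow-u-prescribed}) is the negative gradient flow of a convex $C^1$ function, so monotonicity of the gradient gives uniqueness: if $\tilde{u},\tilde{v}$ both solve the flow then $\frac{d}{dt}\|\tilde{u}-\tilde{v}\|^2=-2\big(\widetilde{K}(\tilde{u})-\widetilde{K}(\tilde{v})\big)\cdot(\tilde{u}-\tilde{v})\le 0$, and equality of the data at $t=0$ forces $\tilde{u}\equiv\tilde{v}$. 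This both justifies the matching and confirms that passing from the classical flow to its extension introduces no ambiguity.
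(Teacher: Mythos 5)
Your proposal is correct and follows exactly the route the paper intends: the paper gives no written proof of this corollary (it remarks only that the arguments are similar to the preceding section), and the implicit derivation is precisely yours --- apply Theorem \ref{thm-section4-mainthm} to the extended flow (\ref{def-extended-flow-u-prescribed}) and identify the classical maximal solution with the extended one on $[0,T)$, just as Theorem \ref{Thm-main-1} is deduced from Theorem \ref{thm-section3-mainthm}. The one place you go beyond the paper is the uniqueness needed for the matching step, and your fix is both necessary and correct: $\widetilde{K}$ is only continuous (near a degenerating triangle the extended angles behave like H\"older-$\frac12$ functions of the lengths, so Picard--Lindel\"of is unavailable), but since $\widetilde{K}-\bar{K}$ is the gradient of the convex $C^1$ functional $\widetilde{G}$ of (\ref{def-G-tuta-conformal}) (which differs from $\widetilde{F}$ by a linear term), monotonicity of the gradient gives $\frac{d}{dt}\|\tilde{u}-\tilde{v}\|^2\le 0$ and hence forward uniqueness. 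This makes rigorous a point the paper passes over in silence, so your write-up is, if anything, more complete than the source.
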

 
To prove these results, one just need to study the prescribed discrete functional  
\begin{equation}
G(u)\triangleq\int_{u'_0}^u \sum_{i=1}^N(K_i-\bar{K}_{i})du_i+C, \,\,\, u\in\Omega_d,
\end{equation}
where $u'_0\in \Omega_d$ is arbitrary chosen, and its $C^1$-smooth extension
\begin{equation}\label{def-G-tuta-conformal}
\widetilde{G}(u)\triangleq\int_{u_0}^u \sum_{i=1}^N(\widetilde{K}_i-\bar{K}_{i})du_i, \,\,\, u\in \mathds{R}^N.
\end{equation}
Since all the proofs in this subsection are similar with the authors former paper \cite{Ge-Jiang1} and with last section, we omit the details here.

\section{Some questions}
Different from the smooth surface setting, there is a combinatorial obstruction for the existence of the constant curvature PL-metric associated to a triangulation $\mathcal{T}$. In fact the following is proved by Feng Luo \cite{Luo0} (For a finite set $X$, we
use $|X|$ to denote the number of elements in $X$.).
\begin{theorem}
Fix a triangulation $\mathcal{T}$ of a closed topological surface $M$. There exists a PL-metric of constant curvature associated to $\mathcal{T}$ if and only if for any proper subset $I$ of the vertices $V$ of $\mathcal{T}$,
\begin{equation}\label{condition-Luofeng}
\frac{|F_I|}{|I|}>\frac{|F|}{|V|}
\end{equation}
where $F_I$ is the set of all triangles having a vertex in $I$.
\end{theorem}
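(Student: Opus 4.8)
The plan is to split the biconditional and treat each direction by angle counting, exploiting that for a closed triangulated surface $3|F|=2|E|$ and $\chi(M)=|V|-|E|+|F|$, so that $K_{av}=\frac{2\pi\chi(M)}{|V|}=2\pi-\pi\frac{|F|}{|V|}$. The guiding principle is the elementary identity, valid at every $u\in\mathds{R}^N$ and every $I\subseteq V$,
\begin{equation*}
\sum_{i\in I}\widetilde{K}_i=2\pi|I|-\sum_{\tau\in F}\Theta_I(\tau),\qquad \Theta_I(\tau)\triangleq\sum_{v\in V(\tau)\cap I}\tilde{\theta}_v(\tau),
\end{equation*}
where $\tau$ runs over faces, $V(\tau)$ is its vertex set, $\tilde{\theta}_v(\tau)$ the inner angle of $\tau$ at $v$, so $\Theta_I(\tau)$ is the total angle a triangle deposits on its $I$-vertices. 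Since each triangle's angles sum to $\pi$, one has $0\le\Theta_I(\tau)\le\pi$, with $\Theta_I(\tau)=0$ when $\tau$ misses $I$.

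\textbf{Necessity.} First I would assume $u\in\Omega_d$ realizes constant curvature $K\equiv K_{av}$ and fix a proper $I\subsetneq V$. Because the metric is genuine all inner angles are strictly positive, so $\Theta_I(\tau)<\pi$ unless all three vertices of $\tau$ lie in $I$; as $M$ is connected and $I$ is proper, at least one triangle of $F_I$ has a vertex outside $I$, forcing $\sum_{\tau}\Theta_I(\tau)<\pi|F_I|$ strictly. Plugging into the identity gives $K_{av}|I|=\sum_{i\in I}K_i>2\pi|I|-\pi|F_I|$, and substituting $K_{av}=2\pi-\pi|F|/|V|$ and rearranging yields exactly $|F_I|/|I|>|F|/|V|$.

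\textbf{Sufficiency.} For the converse I would run the variational machinery on $\widetilde{F}$. The one missing ingredient compared with Theorem \ref{thm-F-tura-proper-tends-infinity} is that there one \emph{assumed} a constant curvature metric to get properness; here I must instead \emph{derive} properness of $\widetilde{F}$ on $\mathscr{U}$ from the combinatorial hypothesis alone. Fix a unit $\xi\in\mathscr{U}$ and study $\varphi_\xi(t)=\widetilde{F}(t\xi)$, whose slope is $\varphi_\xi'(t)=\sum_i(\widetilde{K}_i(t\xi)-K_{av})\xi_i$. As $t\to+\infty$ each generalized triangle degenerates, the vertex (or tied vertices) of smallest $\xi$-value being opposite the longest edge and thus absorbing the full angle $\pi$; since tied minima stay in a common sublevel block, a short computation shows that for every sublevel set $J=\{i:\xi_i<c\}$ the limiting curvatures satisfy $\sum_{i\in J}\widetilde{K}_i^{\infty}=2\pi|J|-\pi|F_J|$, which by hypothesis is $<K_{av}|J|$. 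Writing $g_i=\widetilde{K}_i^{\infty}-K_{av}$, so that $\sum_i g_i=2\pi\chi(M)-|V|K_{av}=0$ by the extended Gauss--Bonnet formula (\ref{Gauss-Bonnet-extend}), and summing by parts along the increasing rearrangement of $\xi$, every partial sum over an initial block is $\le 0$ and is strictly negative exactly at the indices where $\xi$ jumps (where the block is a genuine proper sublevel set), while the increments $\xi_{(k)}-\xi_{(k+1)}\le0$. Hence $\lim_{t\to\infty}\varphi_\xi'(t)=\sum_i g_i\xi_i>0$, so $\varphi_\xi(t)\to+\infty$ along every ray, and Lemma \ref{lemma-gotoinfinity} makes $\widetilde{F}$ proper on $\mathscr{U}$. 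A proper convex $C^1$ function attains its minimum at some $u^*\in\mathscr{U}$, where $\nabla\widetilde{F}(u^*)=0$ gives $\widetilde{K}(u^*)=K_{av}\mathds{1}$.

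\textbf{The main obstacle.} The crux is upgrading the critical point $u^*$, which carries constant \emph{extended} curvature, to a genuine PL-metric, i.e. proving $u^*\in\Omega_d$. If $u^*$ were degenerate, then at the equalities $\sum_{i\in I}\widetilde{K}_i(u^*)=K_{av}|I|$ the bound $\sum_\tau\Theta_I(\tau)\le\pi|F_I|$ could be saturated, because a collapsed triangle may dump its entire $\pi$ onto a single $I$-vertex. I would rule this out by extracting from the collapsed triangles a \emph{proper} set $I_0$ on which $\Theta_{I_0}(\tau)=\pi$ for every $\tau\in F_{I_0}$, forcing $|F_{I_0}|/|I_0|=|F|/|V|$ and contradicting the strict hypothesis. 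Pinning down this $I_0$, essentially the vertex set on which the degenerate locus closes up, and verifying the saturation is the delicate combinatorial point; equivalently, one must show that the curvature map is a diffeomorphism of $\Omega_d\cap\mathscr{U}$ onto the \emph{open} polytope cut out by the inequalities $\sum_{i\in I}\kappa_i>2\pi|I|-\pi|F_I|$, so that $K_{av}\mathds{1}$ lies in the image precisely when all of them are strict. Everything else is routine angle bookkeeping.
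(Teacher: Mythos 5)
Your necessity direction is correct, and it is essentially the standard argument: the identity $\sum_{i\in I}K_i=2\pi|I|-\sum_{\tau\in F_I}\Theta_I(\tau)$, the bound $\Theta_I(\tau)\le\pi$ with strictness for at least one triangle straddling $I$ and $V\setminus I$ (connectivity of $M$), and $K_{av}=2\pi-\pi|F|/|V|$ from $3|F|=2|E|$ together give (\ref{condition-Luofeng}). Be aware, though, that this paper contains no proof of the theorem to compare against: it is quoted from Luo \cite{Luo0}, where sufficiency is obtained not from the conformal functional at all, but from the space of angle structures --- condition (\ref{condition-Luofeng}) is shown, by a linear-programming/Hall-type argument, to be equivalent to nonemptiness of the polytope of positive corner angles summing to $\pi$ in each face and to $2\pi-K_{av}$ at each vertex, and a variational principle on that polytope, in the spirit of Rivin \cite{Ri} and Colin de Verdi\`ere \cite{Colindev}, then realizes such an angle structure by an actual PL-metric. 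Your ray analysis for properness of $\widetilde{F}$ is essentially sound as far as it goes: along $t\xi$ the angle $\pi$ does concentrate at the strict-minimum vertex, ties still deposit the full $\pi$ inside the sublevel block, and Abel summation over the jumps gives $\lim_{t\to\infty}\varphi_\xi'(t)>0$.

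The genuine gap is exactly the step you flag as ``the main obstacle,'' and it is not merely delicate: as formulated it is hopeless, because it coincides with the open problem recorded in Section 3 of this very paper. You fix an arbitrary background metric $d$ and attempt to produce a constant curvature metric inside the single class $[d]$; but the theorem only asserts existence of some constant curvature metric in $\mathcal{M}_\mathcal{T}$, and the paper states explicitly that even when (\ref{condition-Luofeng}) holds it is not known whether a fixed class $[d]$ contains one --- this is precisely why Luo's conjecture permits surgeries. Your proposed repair fails concretely: at the minimizer $u^*$ the extended curvature is constant, so for every nonempty proper $I$ your own identity yields $\sum_{\tau\in F_I}\Theta_I(\tau)=(2\pi-K_{av})|I|=\pi|I||F|/|V|<\pi|F_I|$, a strict inequality perfectly consistent with degeneracy, since a collapsed triangle of $F_I$ may place its $\pi$ on a vertex outside $I$ and contribute $0$, not $\pi$, to $\Theta_I$. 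The saturation $|F_{I_0}|/|I_0|=|F|/|V|$ is forced only asymptotically along rays, where the sublevel structure coherently pushes every $\pi$ into the sublevel set; at a finite critical point there is no such coherence, so no set $I_0$ and no contradiction can be extracted. Likewise your ``equivalent'' reformulation --- that $K$ maps $\Omega_d\cap\mathscr{U}$ onto the open polytope cut out by $\sum_{i\in I}\kappa_i>2\pi|I|-\pi|F_I|$ --- is exactly the open question about the shape of $K(\Omega_d)$ posed at the end of the paper; Rivin's theorem identifies that polytope with the curvatures realizable by metrics ranging over all of $\mathcal{M}_\mathcal{T}$, not within one conformal class. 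To prove sufficiency you must leave the fixed conformal class and argue on angle structures as in \cite{Luo0,Ri}.
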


Although the constant curvature PL-metric is unique in its discrete conformal class, it is not unique in the space of all PL-metrics $\mathcal{M}_\mathcal{T}$. In fact, consider the tetrahedron triangulation of sphere $\mathbb{S}^2$, let the edges that faces each other have same length, meanwhile the three pairs of facing each other edges have length $a$, $b$ and $c$ respectively that satisfying triangle inequalities. Then this PL-metric has constant curvature. On the other hand, even though the combinatorial condition (\ref{condition-Luofeng}) is valid, we still now know
whether there is a PL-metric of constant curvature for some fixed discrete conformal class $[d]$. For given PL-surface $(M, \mathcal{T}, d)$, Theorem \ref{Thm-main-1} shows that the extended solution to discrete Yamabe flow (\ref{Def-extended-flow}) converges if and only if the constant curvature PL-metric exists in $[d]$, this gives a sufficient and necessary condition for the existence of constant curvature PL-metric, however, the analytic condition (flow (\ref{Def-extended-flow}) converges) is not easy to verify. We want to know if there are topological and combinatorial conditions for the existence of constant curvature PL-metric in a fixed discrete conformal class $[d]$. Furthermore, what is the exact range of curvature map $K$? We need to determine the shape of $K(\Omega_d)=K([d])$.\\

\noindent \textbf{Acknowledgements:} Both authors would like to thank Professor Gang Tian for constant encouragement. The research is supported by National Natural Science Foundation of China under grant no.11501027. The first author would also like to thank Professor Feng Luo for many helpful conversations.

Huabin Ge: hbge@bjtu.edu.cn

Department of Mathematics, Beijing Jiaotong University, Beijing 100044, P.R. China\\

Wenshuai Jiang: jiangwenshuai@pku.edu.cn

BICMR and SMS of Peking University, Yiheyuan Road 5, Beijing 100871, P.R. China

\end{document}